\newtheorem{theorem}{Theorem}[section]
\newtheorem{corollary}[theorem]{Corollary}
\newtheorem{definition}[theorem]{Definition}
\newtheorem{proposition}[theorem]{Proposition}
\DeclareMathOperator{\Real}{Re}
\DeclareMathOperator{\Imaginary}{Im}
\DeclareMathOperator{\Area}{Area}
\renewcommand{\Re}{\Real}
\renewcommand{\Im}{\Imaginary}
\title{Discrete minimal surfaces: old and new}
\author[W.Y. Lam]{Wai Yeung Lam}
\address{Department of Mathematics, University of Luxembourg, 4365 Esch-sur-Alzette, Luxembourg}
\email{wyeunglam@gmail.com}
\author[M. Yasumoto]{Masashi Yasumoto}
\address{Graduate School of Technology, Industrial and Social Sciences, Tokushima University, 2-1 Minamijyousanjima-cho, Tokushima, 770-8506, Japan}
\email{yasumoto.masashi@tokushima-u.ac.jp}
\date{\today}
\begin{document}
\begin{abstract}
We survey structure-preserving discretizations of minimal surfaces in Euclidean space. Our focus is on a discretization defined via parallel face offsets of polyhedral surfaces, which naturally leads to a notion of vanishing mean curvature and a corresponding variational characterization. All simply connected discrete minimal surfaces of this type can be constructed from circle patterns via a discrete Weierstrass representation formula. This representation links the space of discrete minimal surfaces to the deformation space of circle patterns, and thereby to classical Teichmüller theory. We also discuss variants of discrete minimal surfaces obtained by modifying the definition of mean curvature, restricting the variational criterion, or replacing circle pattern data with discrete conformal equivalence, Koebe-type circle packings, or quadrilateral meshes with factorized cross ratios. We conclude with open questions on discrete minimal surfaces.
\end{abstract}
\maketitle

\section{Introduction}

Discrete differential geometry (DDG) is the study of structure-preserving discretizations of classical differential geometry. Its central goal is to develop rich discrete theories that mirror the smooth theory and ultimately converge to it in the limit of refinement. This theoretical study leads immediately to various applications in architectural geometry \cite{Pottmann2015} and computer graphics \cite{Crane2020}.

Among the many approaches, two guiding principles shape this field: a discretization should preserve key geometric structures, and it should give rise to a theory with internal coherence and connections to other mathematical areas. While it is often straightforward to construct discretizations that replicate one or two specific features of a smooth theory, it is substantially more challenging to identify discretizations that are mathematically fruitful—that is, those admitting deeper structures and suggesting new perspectives.

Minimal surfaces serve as a natural testing ground for these ideas. As a classical topic in differential geometry, they can be characterized via various perspectives. Their discretizations have been studied extensively and illustrate both the successes and limitations of various approaches within DDG.

As an example, we review a particular structure-preserving discretization of minimal surfaces related to discrete conformal geometry. This approach captures several essential features of the smooth theory, including a variational characterization and a discrete Weierstrass representation via circle patterns. A key insight is to define mean curvature on polyhedral surfaces through the rate of change of area under parallel surface deformations, following Steiner's formula. This leads to a natural definition of discrete minimal surfaces as those with vanishing mean curvature. The resulting discrete minimal surfaces can be constructed from circle patterns together with their infinitesimal deformations. This construction provides a discrete Weierstrass representation formula that, most importantly, establishes a link between the space of discrete minimal surfaces and the deformation space of circle patterns.

Beyond this specific construction, we examine a range of alternative discretizations of minimal surfaces. These include different notions of discrete mean curvature, variational principles, and discrete conformal structures, as well as links to discrete integrable systems. 

 Historically, the integrable-systems approach has been the driving force in the development of discrete surface theory. It enabled the construction not only of discrete minimal surfaces but also of other classical surfaces, including constant-mean-curvature surfaces, constant-Gaussian-curvature surfaces, and isothermic surfaces \cite{Bobenko1996,BP1999,Hertrich-Jeromin2000}. Subsequently, Bobenko, Pottmann, and Wallner \cite{Bobenko2010a} introduced curvature definitions for polyhedral surfaces via Steiner's formula and showed their compatibility with the integrable-systems approach. The present survey reverses this historical order: we begin with curvature-based notions and then discuss integrable systems. For a comprehensive account of the integrable-systems approach to the discrete surface theory, we refer readers to the book of Bobenko and Suris \cite{Bobenko2008}.

Through these various discretizations of minimal surfaces, we highlight a central challenge in DDG: to identify discretizations that not only preserve isolated features of their smooth counterparts but also yield rich theories with deep connections to broader areas of mathematics.

\section{Classical minimal surfaces}

Minimal surfaces are characterized as the critical points of the area functional and naturally arise in physical phenomena such as soap films. Equivalently, they can be defined by the vanishing of their mean curvature. These surfaces are deeply connected to several branches of mathematics, including differential geometry and mathematical physics. Their local theory is well developed and can be elegantly formulated in terms of holomorphic functions in complex analysis.

\subsection{Mean curvature via Steiner's formula} A particularly fruitful approach to understanding mean curvature, with significant implications for discrete surface theory, is based on Steiner’s formula. Suppose $f:U \subset \mathbb{R}^2 \to \mathbb{R}^3$ is a smooth surface with Gauss map $n:U \to \mathbb{S}^2$. For sufficiently small $t$, we consider parallel surfaces $f^t:= f +t n$ which share the same Gauss map $n^t=n$. Steiner's formula states that the area 2-form $\omega_t$ of the parallel surfaces $f^t(u,v)$ satisfies
\[
\omega_t:=\langle f^t_u \times f^t_v, n \rangle du \wedge dv = (1 + 2H t + K t^2) \langle f_u \times f_v, n \rangle du \wedge dv
\]
where $H$ and $K$ are the mean curvature and the Gaussian curvature of $f$. This formula implies that the mean curvature can be interpreted as the rate of change of the area 2-form under parallel surface deformations
\[
H \omega_0 = \frac{1}{2} \left.\frac{d\omega_t}{dt}\right|_{t=0}.
\]
It thus establishes the equivalence between vanishing mean curvature and being a critical point of the area functional with respect to parallel surface deformations.

Furthermore, it is a classical result that minimal surfaces are also critical points 
of the area functional with respect to all compactly supported variations. As we 
will see in Theorem \ref{thm:disvarinormal}, this broader variational property naturally extends to 
our discrete setting.

\subsection{Weierstrass representation}

To relate minimal surfaces to complex analysis, we consider minimal surfaces under conformal parametrization.

A parametrization of a surface $f(u,v)$ is conformal if $\langle f_u, f_v \rangle = 0$ and $\|f_u\|^2 = \|f_v\|^2$, or equivalently the first fundamental form $I$ is a diagonal matrix with equal diagonal entries.

Furthermore, for any smooth immersion of a surface $f$, its first, second, and third fundamental forms denoted by $I, \, I\hspace{-1mm}I , \, I\hspace{-1mm}I\hspace{-1mm}I$ are known to satisfy
\[
I\hspace{-1mm}I\hspace{-1mm}I-2HI\hspace{-1mm}I+KI=0 \, .
\]
When the surface is minimal ($H \equiv 0$), the equation implies that $I\hspace{-1mm}I\hspace{-1mm}I= -KI$. 

Thus for a conformally parametrized minimal surface $f$, we deduce that its Gauss map $n$ is a conformal map to the Riemann sphere. Composing with the stereographic projection $\sigma:\mathbb{S}^2 \to \mathbb{C} \cup \{\infty\}$, we obtain a meromorphic function $g:=\sigma \circ n:U \to \mathbb{C} \cup \{\infty\}$. One may ask whether the converse is true: given a meromorphic function $g$, can we construct a conformal minimal immersion $f$ whose Gauss map is $g$? The answer is positive, and the construction is known as the Weierstrass representation of minimal surfaces.
\begin{proposition}
Every conformal minimal immersion $f$ can be written in the form
\begin{equation}\label{eq:Wrep1}
df=\mathrm{Re} \begin{pmatrix} 1-g^2 \\ \sqrt{-1}(1+g^2) \\ 2g \end{pmatrix} \omega = \mathrm{Re} \begin{pmatrix} 1-g^2 \\ \sqrt{-1}(1+g^2) \\ 2g \end{pmatrix} \frac{q}{dg} \, ,
\end{equation}
where $(g,\omega)$ is a pair of a meromorphic function $g$ and a holomorphic 1-form $\omega=\hat{\omega}dz \ (z=x+\sqrt{-1}y)$ with $g^2\hat{\omega}$ being holomorphic, and $q=\omega\,dg$ is a holomorphic quadratic differential. Furthermore, the Gauss map $n$ of $f$ is given by
\[
n=\frac{1}{1+|g|^2}\begin{pmatrix}
2\mathrm{Re}(g) \\
2\mathrm{Im}(g)\\
-1+|g|^2
\end{pmatrix}
\]
which is precisely the inverse image under the stereographic projection of $g$.
\begin{figure}
  \centering
    \includegraphics[width=0.5\linewidth]{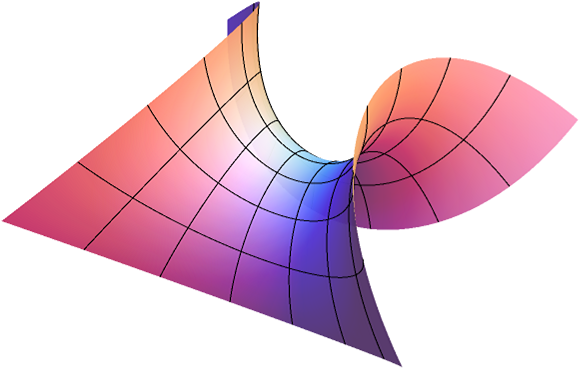}
    \caption{A classical minimal surface in $\mathbb{R}^3$. This  is called Enneper's minimal surface.}
    \label{fig:s_enneper}
\end{figure}
\end{proposition}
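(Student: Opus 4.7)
The plan is to translate minimality and conformality into complex-analytic conditions on the Wirtinger derivative $\phi := f_z = \tfrac{1}{2}(f_u - \sqrt{-1}\,f_v) \colon U \to \mathbb{C}^3$. First I would expand the conformality conditions $\langle f_u, f_v\rangle = 0$ and $\|f_u\|^2 = \|f_v\|^2$ and show that together they are equivalent to the complex-bilinear null condition
\[
\phi \cdot \phi := \phi_1^2 + \phi_2^2 + \phi_3^2 = 0.
\]
Since for a conformal parametrization the mean curvature vector is $\Delta f / (2\|f_u\|^2)$, and $\Delta f = 4\,\partial_{\bar{z}}\phi$, the minimality condition $H \equiv 0$ becomes equivalent to each component of $\phi$ being holomorphic. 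Thus the classification of conformal minimal immersions reduces to the classification of holomorphic null maps $\phi \colon U \to \mathbb{C}^3$ with $\|\phi\|^2 \neq 0$.

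Next I would rationally parametrize the null cone $\{\phi \cdot \phi = 0\}$ via stereographic projection of the conic, setting
\[
g := \frac{\phi_3}{\phi_1 - \sqrt{-1}\,\phi_2}, \qquad \hat\omega := \phi_1 - \sqrt{-1}\,\phi_2,
\]
and deducing from $\phi \cdot \phi = 0$ the identity $\phi_1 + \sqrt{-1}\,\phi_2 = -g^2 \hat\omega$, so that
\[
\phi = \tfrac{1}{2}\,\hat\omega\,\bigl(1 - g^2,\; \sqrt{-1}(1 + g^2),\; 2g\bigr).
\]
Combining this with the universal identity $df = 2\Re(\phi\,dz)$ and absorbing the factor $\tfrac{1}{2}$ into $\omega := \hat\omega\,dz$ yields formula \eqref{eq:Wrep1}. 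Holomorphicity of $\hat\omega$ is immediate, while $g$ is meromorphic and the relation $g^2\hat\omega = -(\phi_1 + \sqrt{-1}\phi_2)$ shows that $g^2\hat\omega$ is holomorphic, which is exactly the compatibility condition in the statement. For the Gauss map I would exploit the identity $f_u \times f_v = 2\sqrt{-1}\,\phi \times \bar\phi$, which follows from $\phi\cdot\phi=0$, substitute the explicit coordinates, and simplify using $|1-g^2|^2 + |1+g^2|^2 + 4|g|^2 = 2(1+|g|^2)^2$ to recover the inverse stereographic image of $g$ as the unit normal, up to orientation.

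The main obstacle I expect is the global and local regularity of the pair $(g,\omega)$ near points where the chosen denominator $\phi_1 - \sqrt{-1}\,\phi_2$ vanishes. There one must verify that the local formulas extend to a globally defined meromorphic $g$ and holomorphic $1$-form $\omega$, and that every pole of $g$ is compensated by a zero of $\omega$ of at least twice the order so that $g^2\omega$ remains holomorphic. This is handled by switching to an alternative stereographic chart using $\phi_1 + \sqrt{-1}\,\phi_2$ on overlaps and invoking the immersion condition $\|\phi\|^2 \neq 0$ to exclude simultaneous vanishing of the relevant components; the resulting transition between charts is a Möbius change of the variable $g$, and the pair $(g, \omega)$ glues consistently on $U$, completing the representation.
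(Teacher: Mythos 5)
The paper states this proposition without proof, as it is the classical Weierstrass--Enneper representation; your argument is the standard textbook derivation and is correct: conformality is encoded as the nullity of $\phi = f_z$, minimality as its holomorphicity, and the null cone is rationally parametrized to produce $(g,\omega)$, with the immersion condition $\|\phi\|^2\neq 0$ handling the pole/zero bookkeeping. The only cosmetic points are that the identity $f_u\times f_v = -2\sqrt{-1}\,\phi\times\bar\phi$ is pure algebra (it does not use $\phi\cdot\phi=0$) and carries the opposite sign to the one you wrote, which you already absorb into the phrase ``up to orientation'' to match the paper's convention $n_3=(|g|^2-1)/(1+|g|^2)$.
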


\subsection{Further comments on Weierstrass-type representations}

In this paper, we focus only on minimal surfaces in $\mathbb{R}^3$. On the other hand, many other classes of surfaces admit Weierstrass-type representations. For instance, any conformal maximal immersion into Minkowski 3-space can be expressed by a Weierstrass-type representation \cite{Kobayashi1983}. Although the representation formula itself is analogous to the Euclidean case, the global behavior --especially the appearance of singularities on the surface-- is remarkably different. For further information in this direction, see \cite{Fujimori2008,Kobayashi1984,Kokubu2005,Umehara2006}.

Likewise, any surface of constant mean curvature $1$ in hyperbolic $3$-space possesses a Weierstrass-type representation, also known as the Bryant representation; see \cite{Bryant1987,Umehara1993} for details. Analogous representations have also been established for several other classes of surfaces  \cite{Aiyama1999,Galvez2000,Galvez2004,Karl1942}. Recent progress, due to Pember \cite{Pember2020}, shows that all these known Weierstrass-type representations can be unified within a single framework based on transformations of $\Omega$-surfaces.

\section{A discretization of minimal surfaces} \label{sec:discrete}

Given a polyhedral surface in Euclidean space, a central question is in what sense it can be regarded as a discrete analogue of a minimal surface. In this context, it is useful to consider the unit face normals of the polyhedral surface, which serve as a discrete analogue of the Gauss map and are naturally defined on a cell decomposition dual to that of the polyhedral surface.

Let $M = (V, E, F)$ be a cellular decomposition of an oriented surface without boundary, where $V$, $E$, and $F$ denote the sets of vertices, edges, and faces, respectively. Let $M^* = (V^*, E^*, F^*)$ denote its dual decomposition. Assuming no boundary, there is a canonical bijection between the vertices $V$ of $M$ and the dual faces $F^*$ of $M^*$, and similarly between the faces $F$ of $M$ and the vertices $V^*$ of $M^*$.

We define the polyhedral surface on the dual decomposition $M^*$ rather than the primal $M$ so that the faces of the surface correspond directly to the vertices of $M$. Under this convention, the discrete Gauss map, which assigns a unit normal vector to each face, is naturally defined on the vertex set $V$. This structural alignment is essential for establishing a direct link between the normal geometry of the surface and circle patterns on $M$.

Let $f : M^* \to \mathbb{R}^3$ be an oriented polyhedral surface, i.e. a realization of vertices such that the vertices of each face are coplanar. Here we allow polygonal faces to be self-intersecting. For each face $\phi \in F^*$, we define its unit normal vector $n_{\phi} \in \mathbb{S}^2$, giving rise to a Gauss map $n : F^* \to \mathbb{S}^2$. Via the duality $F^* \cong V$, we may alternatively view $n$ as a function $n : V \to \mathbb{S}^2$. For each face $\phi$, we can also measure its signed distance from the origin along the normal direction
\[
h_{\phi}:= \langle f_i, n_{\phi} \rangle
\]
for some vertex $i \in \phi$, which yields a height function $h : V \to \mathbb{R}$. Without loss of generality, we assume that no two adjacent faces are coplanar so that the surface $f$ is completely determined by the pair $(n, h)$. Otherwise we remove the common edge to glue the two coplanar faces together.

For simplicity, we assume that $M^*$ is trivalent, that is,  each vertex is connected to three edges, so that $M$ is triangulated. In cases where $M^*$ has vertices of higher valence, we can interpret it as a trivalent mesh with certain edges degenerated to zero length. This allows for a unified treatment of general polyhedral surfaces within a trivalent setting.

For readers interested in trivalent surface theory, we briefly mention another direction. Kotani, Naito, and Omori developed a discrete surface theory for trivalent graphs in $\mathbb{R}^3$ \cite{Kotani2001,Kotani2017,Kotani2024}, motivated by applications in material science. 

\subsection{Mean curvature}

Given a polyhedral surface $f: M^* \to \mathbb{R}^3$. Let $ij \in E^*$ be an edge connecting vertices $i,j \in V^*$. We denote its length by $\ell_{ij}:=\|f_j-f_i\|$. If the edge length is non-zero, the dihedral angle $\alpha \in (-\pi,\pi)$ is determined via
\[
\sin \alpha_{ij} = \langle n_l \times n_r, \frac{f_j -f_i}{\|f_j-f_i\|} \rangle
\]
where $n_r$, $n_l$ are unit normals of the right face and the left face of the oriented edge from $i$ to $j$. The dihedral angle is independent of the orientation of the edge and thus $\alpha_{ij}=\alpha_{ji}$. 

\begin{definition}\label{def:mean}
  Given a polyhedral surface $f: M^* \to \mathbb{R}^3$, its integrated mean curvature $H: F^* \to \mathbb{R}$ is defined for every face $\phi \in F^*$ by
  \[
  H_\phi = \frac{1}{2} \sum_{ij \in \partial \phi} \ell_{ij} \tan \frac{\alpha_{ij}}{2},\]
  where the sum is over all edges $ij$ within the face $\phi$ involving edge lengths $\ell_{ij}$ and dihedral angles $\alpha_{ij}$ between neighboring faces.

  A polyhedral surface is a discrete minimal surface (also known as a C-minimal surface, see \cite{Lam2015b}) if its integrated mean curvature $H$ vanishes for all faces.
\end{definition}

When the cell decomposition $M^*$ is a quadrilateral mesh such as the $\mathbb{Z}^2$-lattice, the discrete minimal surfaces are reminiscent of classical minimal surfaces parametrized by curvature lines (see Figure \ref{fig:enneper} left). 

\begin{figure}
\centering
\begin{minipage}{0.45\textwidth}
    \centering
    \includegraphics[width=\textwidth]{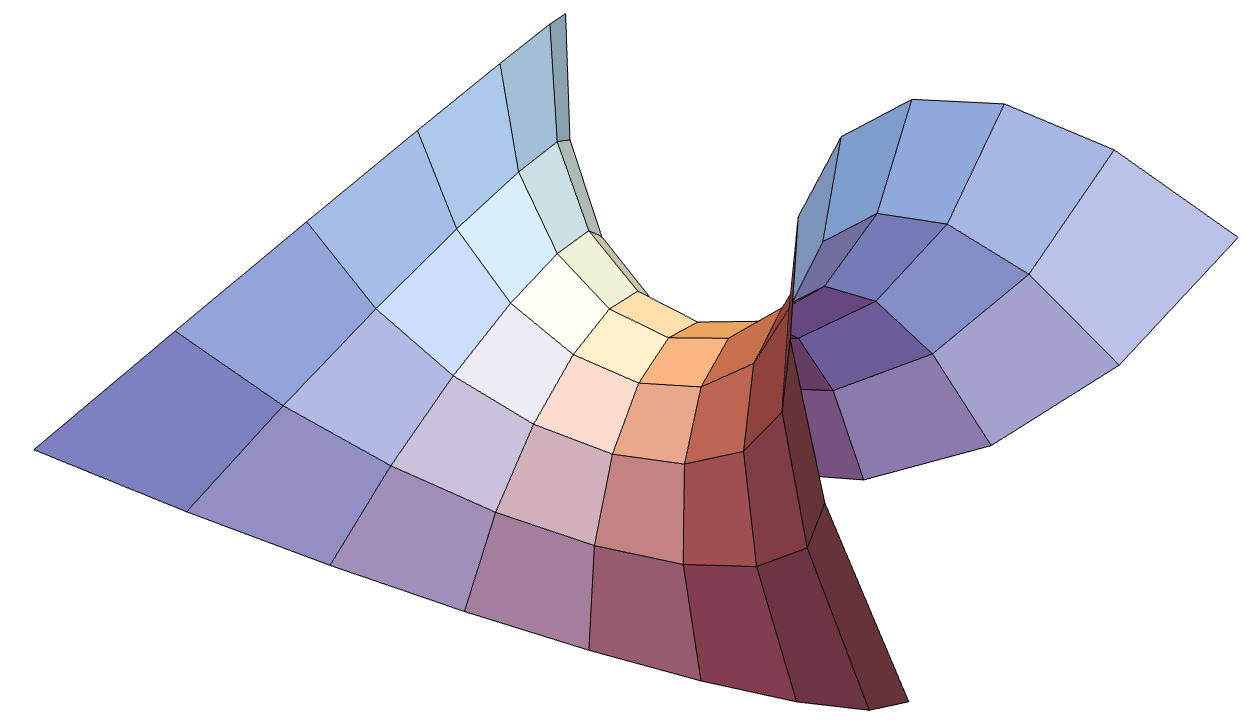}
\end{minipage}
\begin{minipage}{0.45\textwidth}
    \centering
    \includegraphics[width=\textwidth]{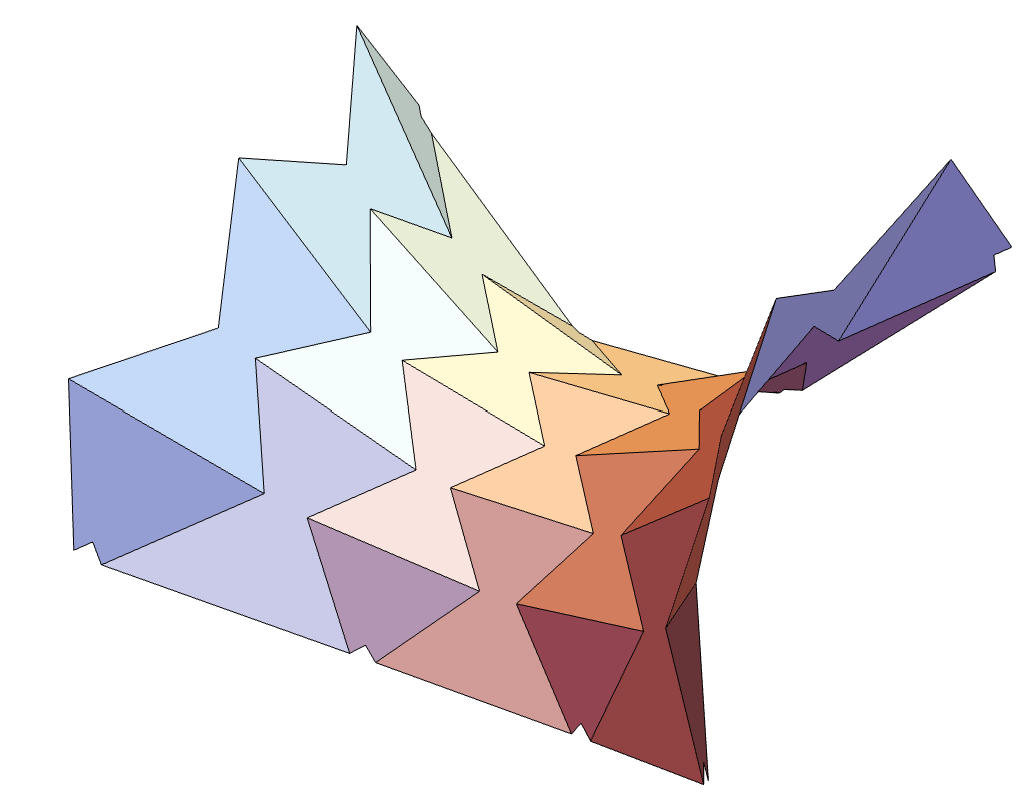}
\end{minipage}
\caption{Two discrete minimal surfaces satisfying Definition \ref{def:mean}, both reminiscent of the classical Enneper minimal surface.}
\label{fig:enneper}
\end{figure}

As an analogue of Steiner's formula, the integrated mean curvature $H_{\phi}$ is related to the change in the signed area of the face
\begin{align}\label{eq:area}
  \Area(f(\phi)) := \frac{1}{2} \sum_{ij \in \partial \phi} \langle f_i \times f_j , n_{\phi} \rangle  
\end{align}
\begin{proposition}\label{prop:area}
	Suppose $f$ is trivalent with non-vanishing edge lengths, determined by its face normal and height function $(n,h)$. Let $\dot{h}$ be an infinitesimal change of the height function and $\dot{f}$ be the corresponding infinitesimal change of vertex position. Then, for every $\phi$, the infinitesimal change in area satisfies
    \begin{equation}\label{eq:infarea}
         \dot{\Area}(f(\phi)) = 2 \dot{h}_{\phi} H_{\phi} + \sum_{ij \in \partial \phi} \frac{\ell_{ij}}{\sin \alpha_{ij}} (\dot{h}_{r(ij)}- \dot{h}_{\phi})
    \end{equation}
    where $ij$ is an oriented edge with left face $\phi$ and the right face $r(ij)$.
\end{proposition}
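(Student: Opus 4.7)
The plan is to differentiate the signed-area formula \eqref{eq:area} directly, keeping the face normal $n_\phi$ fixed (only $h$ varies), regroup the result as a sum over boundary edges of $\phi$, and then read off the required conormal components of the vertex velocity $\dot f_i$ from the face-plane equations.

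Concretely, I would first reshuffle
\[
\dot{\Area}(f(\phi)) \;=\; \frac{1}{2}\sum_{ij \in \partial\phi} \langle \dot f_i \times f_j + f_i \times \dot f_j,\,n_\phi\rangle
\]
into an edge-based sum $\tfrac{1}{2}\sum_{ij\in\partial\phi} \ell_{ij}\,\langle \dot f_i + \dot f_j, m_{ij}\rangle$, where $m_{ij} := (f_j - f_i)\times n_\phi/\ell_{ij}$ is the outward in-plane conormal of $\phi$ along the edge $ij$. This rearrangement uses an algebraic identity on the closed cycle $\partial\phi$; the ``angular-momentum'' remainders $f_i \times \dot f_i$ cancel telescopically around the loop.

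Next, I would express $\langle\dot f_i, m_{ij}\rangle$ in terms of the height variations. Since both $i$ and $j$ lie on the planes of $\phi$ and of the adjacent face $r(ij)$, differentiating $\langle f_i, n_\phi\rangle = h_\phi$ and $\langle f_i, n_{r(ij)}\rangle = h_{r(ij)}$ gives two linear constraints; the dihedral decomposition $n_{r(ij)} = \cos\alpha_{ij}\,n_\phi + \sin\alpha_{ij}\,m_{ij}$ (this is where the sign convention of $\alpha_{ij}$ enters) lets one solve
\[
\langle\dot f_i, m_{ij}\rangle \;=\; \langle\dot f_j, m_{ij}\rangle \;=\; \frac{\dot h_{r(ij)} - \cos\alpha_{ij}\,\dot h_\phi}{\sin\alpha_{ij}}.
\]
The equality of the two conormal components reflects that $\dot f_i$ and $\dot f_j$ are pinned by the same pair of plane equations and can differ only along the edge direction. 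Trivalence enters implicitly here by ensuring that $\dot h$ may be prescribed freely without compatibility constraints at vertices.

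Substituting back gives $\dot{\Area}(f(\phi)) = \sum_{ij\in\partial\phi} \ell_{ij}\,(\dot h_{r(ij)} - \cos\alpha_{ij}\,\dot h_\phi)/\sin\alpha_{ij}$; isolating the $\dot h_\phi$ contribution and invoking the half-angle identity $(1-\cos\alpha)/\sin\alpha = \tan(\alpha/2)$ then produces exactly $2\dot h_\phi H_\phi$ by Definition \ref{def:mean}, which finishes the identity. The main source of friction should be sign bookkeeping: one must verify that $m_{ij}$ is oriented so that $n_{r(ij)}$ really decomposes with coefficients $(\cos\alpha_{ij},\sin\alpha_{ij})$ in the orthonormal frame $(n_\phi, m_{ij})$ given the convention $\sin\alpha_{ij} = \langle n_l\times n_r,(f_j-f_i)/\|f_j-f_i\|\rangle$. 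Once that is pinned down, the remaining manipulations are routine.
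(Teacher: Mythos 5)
Your proposal is correct and follows essentially the same route as the paper: differentiate the signed-area formula, reduce each boundary term to the conormal component $\langle \dot f_i, m_{ij}\rangle = (\dot h_{r(ij)} - \cos\alpha_{ij}\,\dot h_\phi)/\sin\alpha_{ij}$ via the frame decomposition of $n_{r(ij)}$ (the paper phrases this as the triple product $(n_\phi\times n_{r(ij)})\times n_\phi = n_{r(ij)} - \cos\alpha_{ij}\,n_\phi$), and finish with the half-angle identity. Your symmetrized edge sum $\tfrac12\sum\ell_{ij}\langle\dot f_i+\dot f_j, m_{ij}\rangle$ is in fact slightly more careful than the paper's asymmetric form $\sum\langle\dot f_i\times(f_j-f_i),n_\phi\rangle$, which is only valid because face offsets preserve edge directions, a point worth making explicit.
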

\begin{proof}
  Differentiating the area formula \eqref{eq:area} gives
    \begin{align*}
        \dot{\Area}(f(\phi)) &= \sum \langle \dot{f}_i \times (f_j-f_i), n_{\phi} \rangle \\
        &=\sum \langle (f_j-f_i) \times n_{\phi} , \dot{f}_i \rangle \\
        &= \sum \langle \ell_{ij}(\frac{n_{\phi}\times n_{r(ij)}}{\sin \alpha_{ij}}) \times n_{\phi} , \dot{f}_i \rangle \\
        &= \sum \frac{\ell_{ij}}{\sin \alpha_{ij}} \langle n_{r(ij)} - n_{\phi} \cos \alpha_{ij} , \dot{f}_i \rangle \\
        &= \sum \frac{\ell_{ij}}{\sin \alpha_{ij}} (\dot{h}_{r(ij)} - \dot{h}_{\phi}\cos \alpha_{ij} ) \\
        &= \sum \frac{\ell_{ij}}{\sin \alpha_{ij}} (\dot{h}_{r(ij)}- \dot{h}_{\phi}) + 2 \dot{h}_{\phi} H_{\phi} 
    \end{align*}
    and the claim follows.
\end{proof}

Setting $\dot{h} \equiv 1$ in Proposition \ref{prop:area} yields a discrete analogue of Steiner's formula for the mean curvature.

\begin{corollary}[Karpenkov-Wallner \cite{Karpenkov2014}]
	Suppose $f$ is trivalent with non-vanishing edge lengths. For $t \in (-\epsilon,\epsilon)$, we offset each face in parallel by the same distance $t$ in the direction of the face normal and obtain a polyhedral surface $f_t$. Then the integrated mean curvature $H$ satisfies, for every face,
	\[
	H_{\phi} = \frac{1}{2} \frac{d}{dt} \Area(f_t(\phi))|_{t=0}.
	\]
\end{corollary}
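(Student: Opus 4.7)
The plan is to recognize the parallel face offset as a particular instance of the variational deformation covered by Proposition \ref{prop:area}, corresponding to the uniform choice $\dot{h} \equiv 1$, and to read off the conclusion directly from \eqref{eq:infarea}.

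First I would observe that since each face of $f_t$ is a parallel translate of the corresponding face of $f$, the face normals are preserved: the Gauss map of $f_t$ agrees with $n$ on every face. Consequently the height function of $f_t$ is
\[
h^t_\phi = \langle f_i + t\, n_\phi, n_\phi \rangle = h_\phi + t
\]
for any vertex $i$ of $\phi$, so $\dot{h}_\phi = 1$ uniformly in $\phi$. The trivalence hypothesis enters here to guarantee that $f_t$ is a genuinely defined polyhedral surface for small $t$: each vertex of $f_t$ arises as the unique intersection of the three offset planes meeting at the corresponding vertex of $f$, and this intersection depends smoothly on $t$ near $0$ because the three adjacent face normals are linearly independent (no two neighboring faces being coplanar). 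The induced vertex velocity $\dot{f}_i$ is exactly the one that Proposition \ref{prop:area} associates to $\dot{h} \equiv 1$.

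Next I would substitute $\dot{h}_\phi = \dot{h}_{r(ij)} = 1$ into \eqref{eq:infarea}. Every term in the edge sum carries the factor $(\dot{h}_{r(ij)} - \dot{h}_\phi) = 0$, so that sum collapses and only the first term survives, giving
\[
\left.\frac{d}{dt}\right|_{t=0} \Area(f_t(\phi)) = 2 H_\phi,
\]
which rearranges to the claimed formula.

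The only conceptual subtlety, and thus the main point to verify carefully, is the first step: one must check that the parallel face offset deformation is genuinely captured by the framework of Proposition \ref{prop:area}, i.e.\ that $f_t$ exists as a polyhedral surface with the same combinatorics for small $t$, and that its induced vertex velocity coincides with the $\dot{f}$ produced by $\dot{h} \equiv 1$. Both facts reduce to trivalence together with non-coplanarity of adjacent faces, which are standing assumptions, so no further work is required.
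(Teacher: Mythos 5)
Your proof is correct and follows exactly the paper's argument: the paper likewise obtains this corollary by setting $\dot{h} \equiv 1$ in Proposition \ref{prop:area}, whereupon the edge sum in \eqref{eq:infarea} vanishes and only the term $2\dot{h}_\phi H_\phi = 2H_\phi$ remains. Your additional verification that the offset surface $f_t$ is well defined for small $t$ via trivalence is a welcome (if implicit in the paper) check, but does not change the route.
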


It is natural to ask whether discrete minimal surfaces can be characterized as critical points of the area functional under infinitesimal deformations. However, this is not the case for arbitrary infinitesimal deformations. We need to consider infinitesimal deformations that preserve the face normals.

\begin{theorem}\label{thm:disvarinormal}
  A trivalent polyhedral surface is a discrete minimal surface if and only if it is a critical point of the area functional with respect to any face offset with finite support.
\end{theorem}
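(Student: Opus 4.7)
The plan is to integrate the local identity of Proposition~\ref{prop:area} over all faces. A face offset with finite support is an infinitesimal deformation parametrized by a function $\dot{h} : F^* \to \mathbb{R}$ that vanishes outside a finite set $S \subset F^*$; only the faces in $S$ together with their neighbors experience a nonzero area change, so the total variation
\[
\mathcal{A}(\dot{h}) := \sum_{\phi \in F^*} \dot{\Area}(f(\phi))
\]
is a genuinely finite sum. Substituting Proposition~\ref{prop:area} into this sum I would obtain
\[
\mathcal{A}(\dot{h}) = 2 \sum_{\phi} \dot{h}_{\phi} H_{\phi} + \sum_{\phi} \sum_{ij \in \partial \phi} \frac{\ell_{ij}}{\sin \alpha_{ij}} \bigl( \dot{h}_{r(ij)} - \dot{h}_{\phi} \bigr).
\]

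Next I would show that the double sum vanishes identically. Each unoriented edge bounds exactly two faces $\phi_1,\phi_2$ and contributes to the double sum once through each of its two orientations. A quick check of the definition of the dihedral angle shows that reversing an edge flips both $n_l \times n_r$ and $f_j - f_i$, so $\alpha_{ij} = \alpha_{ji}$; the two contributions of the unoriented edge are therefore $\frac{\ell}{\sin\alpha}(\dot{h}_{\phi_2} - \dot{h}_{\phi_1})$ and $\frac{\ell}{\sin\alpha}(\dot{h}_{\phi_1} - \dot{h}_{\phi_2})$, which cancel. What remains is the clean identity
\[
\mathcal{A}(\dot{h}) = 2 \sum_{\phi} \dot{h}_{\phi} H_{\phi}.
\]

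From this, both directions of the equivalence are immediate. The ``if'' direction follows at once: $H \equiv 0$ forces $\mathcal{A}(\dot{h}) = 0$ for every finitely supported offset. For the converse, testing $\mathcal{A}$ against the indicator $\dot{h} = \mathbf{1}_{\{\phi_0\}}$ isolates a single term and gives $H_{\phi_0} = 0$ at each face $\phi_0$.

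The main obstacle I expect is the orientation bookkeeping needed to make the cancellation rigorous: one must fix consistent conventions for left/right faces so that $\alpha_{ij} = \alpha_{ji}$ holds as an equality (not merely up to sign) and the paired contributions along an unoriented edge are exact negatives. A secondary subtlety, already implicit in the statement, is verifying that every $\dot{h}$ with finite support really arises as the derivative of a one-parameter family of polyhedral surfaces with unchanged face normals; on a trivalent $M^*$ this causes no trouble, since each vertex is the transverse intersection of its three incident face planes, so small perturbations of $h$ define unique smooth vertex motions and hence an admissible face offset.
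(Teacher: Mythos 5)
Your proposal is correct and follows essentially the same route as the paper: the paper's (much terser) proof likewise sums the identity of Proposition~\ref{prop:area} over all faces to get $\sum_{\phi} \dot{\Area}(f(\phi)) = \sum_{\phi} 2\dot{h}_{\phi} H_{\phi}$, leaving the edge-pairing cancellation and the indicator-function test implicit. Your additional checks --- that $\alpha_{ij}=\alpha_{ji}$ so the oriented-edge contributions cancel in pairs, and that any finitely supported $\dot h$ is realized by an actual face offset on a trivalent mesh --- are exactly the details the paper omits, and they are argued correctly.
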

\begin{proof}
 Under any infinitesimal face offset $\dot{h}$ with finite support, Proposition \ref{prop:area} yields that the change in total area is equal to
\begin{align*}
  \sum_{\phi \in F} \dot{\Area}(f(\phi)) = \sum_{\phi \in F^*} 2 \dot{h}_{\phi} H_{\phi}
\end{align*}
and the claim follows.
\end{proof}

\subsection{Weierstrass representation via circle patterns}

 Given a cell-decomposed surface $M=(V,E,F)$, a circle pattern is a realization of vertices $g:V \to \mathbb{C}$ in the plane such that each face has a circumcircle passing through its vertices. If the graph is triangular, every realization is a circle pattern as long as no edge is degenerate to a point (see Figure \ref{fig:circlepattern}). 

\begin{figure}
\centering
\includegraphics[width=0.5\textwidth]{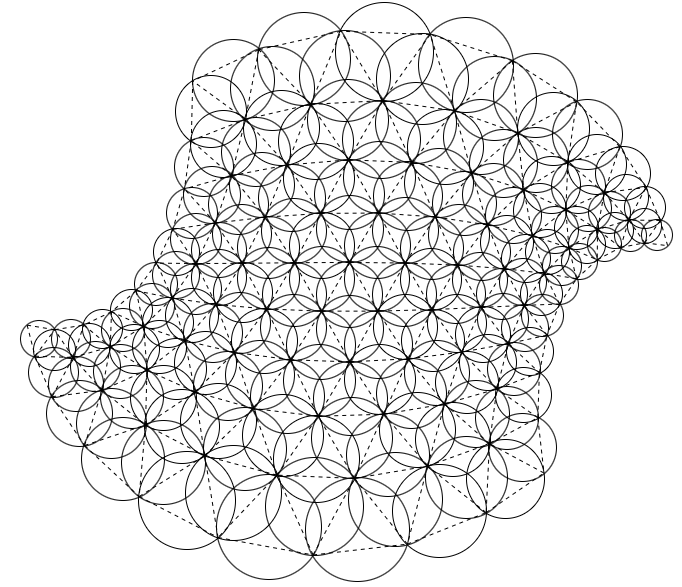}
\caption{A circle pattern with constant intersection angles $\Theta=\pi/3$.}
\label{fig:circlepattern}
\end{figure}

Given a trivalent polyhedral surface in space, composing its Gauss map $n:V \to \mathbb{S}^2$ with the stereographic projection yields a circle pattern in the plane. One could ask whether there is a connection between discrete minimal surfaces and circle patterns analogous to the smooth theory. The answer is positive, and the connection is established via a discrete analogue of the Weierstrass representation for minimal surfaces \cite{Lam2015a}.
\begin{definition}
Let $g: V \rightarrow \mathbb{C}$ be a non-degenerate realization in $\mathbb{C}$. A real-valued function $q: E \to \mathbb{R}$ defined on the set of all edges $E$ is called a {\rm discrete holomorphic quadratic differential} if it satisfies
for all vertices $i \in V$
\begin{equation}\label{eq:hqd}
\begin{split}
&\sum_{j} q_{ij}=0 \, , \\ 
&\sum_{j}\frac{ q_{ij} }{g_j-g_i}=0
\end{split}
\end{equation}
where the sum is taken over all vertices $j$ adjacent to $i$.
\end{definition}

One can verify directly that Equation \eqref{eq:hqd} is equivalent to the condition
\begin{equation}\label{eq:closedc3}
  \sum_{j} \frac{ q_{ij} }{g_j-g_i} \begin{pmatrix} 1-g_i g_j \\ \sqrt{-1}(1+g_ig_j) \\ g_i+g_j \end{pmatrix} =0.
\end{equation}
The term \[ \frac{ q_{ij} }{g_j-g_i} \begin{pmatrix} 1-g_i g_j \\ \sqrt{-1}(1+g_ig_j) \\ g_i+g_j \end{pmatrix} \] is regarded as a discrete 1-form, which assigns to an oriented edge $ij$ a vector in $\mathbb{C}^3$. Reversing the orientation of the edge, i.e. interchanging $i$ and $j$, changes the sign of the vector. 

Thus, Equation \eqref{eq:closedc3} implies that a discrete holomorphic quadratic differential $q$ corresponds to a $\mathbb{C}^3$-valued 1-form that is closed and hence can be integrated, assuming the surface $M$ is simply connected.

\begin{theorem}
[Lam \cite{Lam2015b}] \label{thm:weierstrass}
Given a non-degenerate realization $g: V \rightarrow \mathbb{C}$ and a discrete holomorphic quadratic differential $q: E \rightarrow \mathbb{R}$, there exists a realization $\mathcal{F}: V^* \rightarrow \mathbb{C}^3$ satisfying for every oriented edge from $i$ to $j$
\begin{equation}
\mathcal{F}_l - \mathcal{F}_r=\frac{ q_{ij} }{g_j-g_i} \begin{pmatrix} 1-g_i g_j \\ \sqrt{-1}(1+g_ig_j) \\ g_i+g_j \end{pmatrix}
\end{equation}
where $l$ and $r$ denote the left and the right face of the oriented edge. The imaginary part $f:= \Im(\mathcal{F}): V^* \rightarrow \mathbb{R}^3$ is a discrete minimal surface with Gauss map $n$ given by the inverse stereographic projection of $g$. Conversely, every simply connected discrete minimal surface arises in this way.

\end{theorem}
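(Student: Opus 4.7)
The plan is to prove the forward direction in three steps---closedness of the defining $\mathbb{C}^3$-valued 1-form, identification of the Gauss map of $f=\Im\mathcal{F}$, and verification of vanishing integrated mean curvature---and then invert this construction for the converse. Write $v_{ij} := \frac{q_{ij}}{g_j-g_i}\vec{w}(g_i,g_j)$ with $\vec{w}(g_i,g_j) := (1-g_ig_j,\,\sqrt{-1}(1+g_ig_j),\,g_i+g_j)^{\top}$. Using the decomposition $\vec{w}(g_i,g_j) = (1,\sqrt{-1},g_i)^{\top} + g_j(-g_i,\sqrt{-1}g_i,1)^{\top}$ together with $g_j/(g_j-g_i) = 1 + g_i/(g_j-g_i)$, the vertex sum decomposes as
\[
\sum_{j\sim i} v_{ij} = (1-g_i^2,\,\sqrt{-1}(1+g_i^2),\,2g_i)^{\top} A_i + (-g_i,\,\sqrt{-1}g_i,\,1)^{\top} B_i,
\]
where $A_i := \sum_j q_{ij}/(g_j-g_i)$ and $B_i := \sum_j q_{ij}$. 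The two coefficient vectors are linearly independent in $\mathbb{C}^3$ for every $g_i \in \mathbb{C}$, so closedness around vertex $i$ is equivalent to $A_i = B_i = 0$, i.e., to \eqref{eq:hqd}. Simple-connectedness of $M$ then yields $\mathcal{F}:V^*\to\mathbb{C}^3$ by integration.

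To identify the Gauss map of $f = \Im\mathcal{F}$, I compute with $n_i = \sigma^{-1}(g_i) = (2\Re g_i,\, 2\Im g_i,\, |g_i|^2-1)/(1+|g_i|^2)$ and find $\langle n_i, \vec{w}(g_i,g_j)\rangle = g_i-g_j$, hence $\langle n_i, \mathcal{F}_l - \mathcal{F}_r\rangle = -q_{ij} \in \mathbb{R}$. Taking imaginary parts, $\langle n_i, f_l - f_r\rangle = 0$, so each face $\phi_i$ of $f$ is planar with normal $n_i$. For the discrete minimality condition $H_{\phi_i} = 0$, I would express the edge length $\ell_{ij}^* = \|f_l-f_r\|$ and the half-dihedral-angle tangent $\tan(\alpha_{ij}/2)$ in closed form in $g_i, g_j, q_{ij}$, exploiting the null identity $(1-g_ig_j)^2 - (1+g_ig_j)^2 + (g_i+g_j)^2 = (g_i-g_j)^2$ to simplify. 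The goal is to show each summand $\ell_{ij}^*\tan(\alpha_{ij}/2)$ is linear in $q_{ij}$ with $g$-dependent coefficients such that the face-boundary sum collapses into a combination of $A_i$ and $B_i$, which then vanishes by \eqref{eq:hqd}.

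For the converse, let $f$ be a simply-connected discrete minimal surface with Gauss map $n$, and set $g := \sigma \circ n$. Each edge vector $f_l - f_r$ lies in the intersection $n_i^{\perp} \cap n_j^{\perp}$, a real one-dimensional subspace which by the planarity computation above is spanned by $\Im(\vec{w}(g_i,g_j)/(g_j-g_i))$; hence there is a unique real $q_{ij}$ with $f_l - f_r = \Im(q_{ij}\vec{w}(g_i,g_j)/(g_j-g_i))$. Closedness of $f$ around each face $\phi_i$ gives $\Im(\sum_j v_{ij}) = 0$; the minimality $H_{\phi_i} = 0$ supplies the complementary real identity that forces $\sum_j v_{ij} = 0$, which by the decomposition above is exactly \eqref{eq:hqd}. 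The forward construction then recovers $f$ from $(g,q)$.

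I expect the main obstacle to be Step 3 of the forward direction: making explicit the reduction of $H_{\phi_i} = 0$ to \eqref{eq:hqd}. This hinges on deriving clean closed-form expressions for $\ell_{ij}^*$ and $\tan(\alpha_{ij}/2)$ in terms of $(g_i, g_j, q_{ij})$, for which the null identity above is the essential algebraic tool. Once this identification is secured, the same pair of scalar equations drives both directions of the equivalence.
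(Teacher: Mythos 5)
First, note that the survey does not actually prove Theorem~\ref{thm:weierstrass}; it cites \cite{Lam2015b} and only records the equivalence of \eqref{eq:hqd} with the closedness condition \eqref{eq:closedc3}, so your proposal must be judged on its own. Your Step 1 is correct and complete: the decomposition $\sum_j v_{ij}=W_iA_i+U_iB_i$ with $W_i=(1-g_i^2,\sqrt{-1}(1+g_i^2),2g_i)^{\top}$, $U_i=(-g_i,\sqrt{-1}g_i,1)^{\top}$ checks out, the $2\times2$ minors of the pair $(W_i,U_i)$ are $1+g_i^2$, $\sqrt{-1}(1-g_i^2)$, $2\sqrt{-1}g_i$, which never vanish simultaneously, so closedness is indeed equivalent to $A_i=B_i=0$. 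Step 2 is also correct: $\langle n_i,\vec w(g_i,g_j)\rangle=g_i-g_j$ holds (I verified the algebra), giving planarity of each dual face with normal $n_i$.

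The genuine gap is Step 3, which is the actual content of the theorem ($H_\phi=0$) and is left as a plan rather than executed; the converse inherits the same gap, since your claim that $H_{\phi_i}=0$ ``supplies the complementary real identity'' presupposes exactly the unproven edge identity. To close it, push your null identity one step further: bilinearly, $\langle\vec w,\vec w\rangle=(g_i-g_j)^2$, hence $\langle v_{ij},v_{ij}\rangle=q_{ij}^2$, i.e.\ $\|\Re v_{ij}\|^2-\|\Im v_{ij}\|^2=q_{ij}^2$ and $\langle\Re v_{ij},\Im v_{ij}\rangle=0$. Combined with $\langle n_i,\Re v_{ij}\rangle=-q_{ij}$ and $\langle n_j,\Re v_{ij}\rangle=q_{ij}$, this forces $\Re v_{ij}=\frac{q_{ij}}{1-\cos\alpha_{ij}}(n_j-n_i)$, whence $\|\Im v_{ij}\|^2=q_{ij}^2\frac{1+\cos\alpha_{ij}}{1-\cos\alpha_{ij}}$ and $\ell_{ij}\tan(\alpha_{ij}/2)=\pm q_{ij}$. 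So each summand of $H_{\phi_i}$ is not merely ``linear in $q_{ij}$ with $g$-dependent coefficients'': the coefficient is a universal constant, and $H_{\phi_i}=\pm\frac12\sum_j q_{ij}=\pm\frac12 B_i=0$ uses only the first equation of \eqref{eq:hqd}, not a combination of $A_i$ and $B_i$. For the converse you additionally need (and omit) the argument that once $B_i=0$ is extracted from $H_{\phi_i}=0$ via this same identity, the remaining condition $\Im(W_iA_i)=0$ forces $A_i=0$; this holds because $W_i$ multiplied by a nonzero complex scalar is never a real vector (a short case check using the minors above), but it is not automatic and must be stated.
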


For every fixed $\theta \in [0,2\pi)$, one can consider $f_{\theta}:= \Im(e^{\theta\sqrt{-1}}\mathcal{F})$ which leads to the associated family of discrete minimal surfaces. In particular, when $\theta=\pi/2$, we obtain the real part $\Re \mathcal{F}=f_{\frac{\pi}{2}}$, which is referred to as an A-minimal surface in \cite{Lam2015b} and reminiscent of classical minimal surfaces parametrized by asymptotic lines. Such discrete minimal surface $f_{\frac{\pi}{2}}$ does not have planar faces but planar vertex stars. The pair $(f_0,f_{\frac{\pi}{2}})= (\Im \mathcal{F}, \Re \mathcal{F})$ is regarded as a discrete analogue of the conjugate pair of minimal surfaces in the smooth theory.

\subsection{Deformation space of circle patterns}

Discrete holomorphic quadratic differentials were introduced in connection with the Weierstrass representation of discrete minimal surfaces. They naturally arise from infinitesimal deformations of circle patterns. The Weierstrass representation thus provides a bridge between the space of discrete minimal surfaces and the deformation space of circle patterns.

For each edge of a circle pattern, we assign the intersection angle between the circumcircles of the two adjacent faces, yielding a function $\Theta: E \to [0, 2\pi)$. These intersection angles serve as a discrete analogue of a conformal structure. Circle patterns sharing the same intersection angles are regarded as having the same discrete conformal structure, and a pair of such circle patterns is interpreted as a discrete conformal map. The angle function is not arbitrary and Rivin \cite{Rivin1996} established necessary and sufficient conditions on $\Theta$ for the existence of circle patterns. For example, one necessary condition is that the sum of intersection angles around each vertex equals $2\pi$.

Up to M\"{o}bius transformations, we consider $P(\Theta)$ the space of circle patterns on the Riemann sphere with prescribed intersection angles $\Theta$, which can be parametrized by complex cross ratios as follows. Given a circle pattern $g:V \to \mathbb{C}\cup\{\infty\}$, the complex cross ratio $X$ associated to an edge $\{ij\}$ shared by triangles $\{ijk\}$ and $\{jil\}$ is defined via (see Figure \ref{fig:orientation}):
\begin{equation}\label{eq:cr}
  X_{ij} :=  -\frac{(g_k - g_i)(g_l -g_j)}{(g_i - g_l)(g_j - g_k)} =X_{ji} \in \mathbb{C}
\end{equation}
It induces a complex valued function on unoriented edges $X:E \to \mathbb{C}$. Two circle patterns induce the same complex cross ratios if and only if they differ by M\"{o}bius transformations. The intersection angles of circles can be read off from the complex cross ratios via the observation
\[
\Theta = \Im \log(X).
\]
One can further ask whether the complex cross ratios satisfy any algebraic relations. Indeed, one finds that for every vertex $i$ with adjacent vertices numbered as $1$, $2$, ..., $r$ in the clockwise order counted from the link of $i$,
	\begin{gather} 
		\Pi_{j=1}^r X_{ij} =1,  \label{eq:crproduct}\\
		X_{i1} + X_{i1} X_{i2} + X_{i1}X_{i2}X_{i3} + \dots +  X_{i1}X_{i2}\dots X_{ir} =0. \label{eq:crsum}
	\end{gather}
Conversely, given a function $X:E \to \mathbb{C}$ satisfying the above algebraic relations, one can construct a circle pattern $g:V \to \mathbb{C}\cup\{\infty\}$ with complex cross ratios $X$ by solving Equation \eqref{eq:cr}. The circle pattern is unique up to M\"{o}bius transformations.

\begin{proposition}
  Up to M\"{o}bius transformations, the space of circle patterns on the Riemann sphere with prescribed intersection angles $\Theta$ can be parametrized by complex cross ratios
  \[
  P(\Theta) :=  \left\{ X: E \to \mathbb{C} \;\middle|\; \Im (\log X) = \Theta \text{ and } X \text{ satisfies } \eqref{eq:crproduct},\, \eqref{eq:crsum} \right\}.
  \]
\end{proposition}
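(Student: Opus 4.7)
The plan is to establish a bijection between $P(\Theta)$ and the moduli of circle patterns with intersection angles $\Theta$ modulo Möbius transformations. This decomposes into three claims: the cross ratios of any circle pattern lie in $P(\Theta)$; any $X \in P(\Theta)$ integrates to a circle pattern; and two circle patterns with the same $X$ differ by a Möbius transformation.

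For the first claim, I would apply a Möbius transformation sending a chosen vertex $g_i$ to $\infty$. Cross ratios and intersection angles are Möbius invariants. The neighbors $g_1,\ldots,g_r$ of $i$, taken in cyclic order, then form a closed Euclidean polygon whose edges lie on the extended circumcircles through $g_i$. Setting $d_s := g_{s+1}-g_s$ with cyclic indexing (so $d_0 = d_r$), the limit of \eqref{eq:cr} as $g_i \to \infty$ collapses to $X_{is} = d_s/d_{s-1}$. The product telescopes: $\prod_s X_{is} = d_r/d_0 = 1$, giving \eqref{eq:crproduct}. The partial products satisfy $X_{i1}\cdots X_{is} = d_s/d_0$, so $\sum_s X_{i1}\cdots X_{is} = (1/d_0)\sum_s d_s$, which vanishes by closure of the polygon, giving \eqref{eq:crsum}. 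The identity $\Im \log X = \Theta$ was recorded just before the proposition.

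For the converse, given $X \in P(\Theta)$, I would reconstruct $g$ inductively. Choose an initial triangle and prescribe its three vertex positions freely in $\mathbb{C}\cup\{\infty\}$; by triple transitivity of the Möbius group this absorbs the Möbius ambiguity. For each adjacent triangle $\{j,i,l\}$, equation \eqref{eq:cr} is a Möbius equation in $g_l$ with a unique solution, so $g$ propagates across dual edges. Global well-definedness reduces to local consistency around each vertex, which is precisely what \eqref{eq:crproduct} and \eqref{eq:crsum} encode: reversing the calculation above, \eqref{eq:crproduct} makes the iteration $d_s := X_{is} d_{s-1}$ cyclically consistent and \eqref{eq:crsum} is the polygon closure $\sum_s d_s = 0$. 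The main subtlety I expect is promoting this local vertex-closure to global well-definedness on $M$; this follows from the simple connectivity of the sphere, since any closed loop in the dual graph decomposes into elementary vertex loops whose monodromy is trivial by \eqref{eq:crproduct} and \eqref{eq:crsum}. The angle condition $\Im \log X = \Theta$ then forces the reconstructed pattern to realize the prescribed intersection angles, completing the bijection.
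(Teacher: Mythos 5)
Your proposal is correct and follows essentially the same route the paper sketches in the paragraphs surrounding the proposition: Möbius invariance and injectivity of the cross ratios, derivation of the closure relations \eqref{eq:crproduct} and \eqref{eq:crsum} by normalizing $g_i=\infty$ so that $X_{is}=d_s/d_{s-1}$ telescopes, and reconstruction of $g$ from $X$ by propagating across dual edges with consistency guaranteed by these relations and simple connectivity. The paper states these steps without detailed justification, and your write-up supplies the standard details correctly.
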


\begin{figure}
\centering
\includegraphics[width=0.5\textwidth]{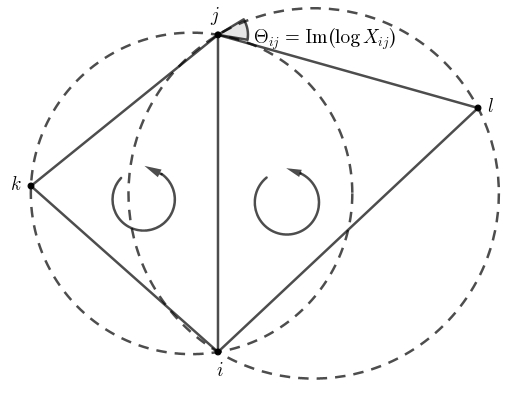}
\caption{A circle pattern with complex cross ratio $X_{ij} = -\frac{(g_k - g_i)(g_l -g_j)}{(g_i - g_l)(g_j - g_k)}$. The intersection angle of the circumcircles of triangles $\{ijk\}$ and $\{jil\}$ is given by $\Theta_{ij} = \arg(X_{ij})$.}
\label{fig:orientation}
\end{figure}

This space is closely related to the complex projective structures of surfaces and hyperbolic geometry. It exhibits rich structures of the Weil-Petersson geometry of the Teichm\"{u}ller space, which is the space of classical conformal structures on surfaces \cite{Lam2019,Lam2024torus,Lam2024pull}.

We further investigate the tangent space of $P(\Theta)$ and explain how infinitesimal deformations of circle patterns correspond to discrete holomorphic quadratic differentials. Suppose $X^{(t)}:E \to \mathbb{C}$ is a 1-parameter family of cross ratios satisfying Equations \eqref{eq:crproduct} and \eqref{eq:crsum} with $X=X^{(t)}|_{t=0}$, then by differentiation the logarithmic derivative $q:= \frac{d}{dt}\left.(\log X^{(t)})\right|_{t=0}$ satisfies for every vertex $i$ with adjacent vertices numbered as $1$, $2$, ..., $r$ in the clockwise order counted from the link of $i$,
 \begin{gather} 
 	\sum_{j=1}^r q_{ij} =0  \label{eq:xcrproduct}\\
 	q_{i1} X_{i1} + (q_{i1}+q_{i2}) X_{i1} X_{i2} + \dots +  (\sum_{j=1}^r q_{ij}) X_{i1}X_{i2}\dots X_{ir} =0 \label{eq:xcrsum}
 \end{gather}
 Furthermore, if $\Im (\log X^{(t)})$ remains constant for all $t$, then $q$ is purely real. By substituting $X_{ij}$ from Equation \eqref{eq:cr} into Equation \eqref{eq:xcrsum}, one can verify that $q$ satisfies Equation \eqref{eq:hqd} and hence is a discrete holomorphic quadratic differential with respect to the realization $g$.

We thus obtain a corollary of the Weierstrass representation formula (Theorem \ref{thm:weierstrass}) that links the space of discrete minimal surfaces to the deformation space of circle patterns.
\begin{corollary} \label{cor:weierstrass}
	Every simply connected discrete minimal surface with face normal $n$ corresponds to an infinitesimal deformation of a circle pattern $g:=\sigma \circ n$ that preserves the intersection angles of the circles, where $\sigma:\mathbb{S}^2 \to \mathbb{C}\cup\{\infty\}$ is the stereographic projection.
\end{corollary}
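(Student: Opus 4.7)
The plan is to derive the corollary by combining Theorem \ref{thm:weierstrass} with the cross-ratio parametrization of $P(\Theta)$ established in the preceding discussion. Given a simply connected discrete minimal surface $f$ with face normal $n$, the converse direction of the Weierstrass representation produces a pair consisting of a non-degenerate realization $g = \sigma \circ n : V \to \mathbb{C} \cup \{\infty\}$ and a discrete holomorphic quadratic differential $q : E \to \mathbb{R}$ such that $f = \Im(\mathcal{F})$ for the integrated $\mathbb{C}^3$-valued 1-form. The task then reduces to reinterpreting the data $q$ as a tangent vector at $g$ to the space of circle patterns sharing its intersection angles.

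First I would verify that $g$ is indeed a circle pattern. On each triangular face $\{ijk\}$, the three Gauss images $n_i,n_j,n_k$ lie on a common small circle of $\mathbb{S}^2$ (the circumscribed cone of the face normals is intersected with $\mathbb{S}^2$), and stereographic projection carries such circles to circles or lines in $\mathbb{C}$. Hence $g$ realizes every triangle on a circumcircle, and its cross-ratio function $X : E \to \mathbb{C}$ defined via \eqref{eq:cr} lies in $P(\Theta)$ with $\Theta = \Im \log X$.

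Next I would reverse the derivation carried out in the paragraph preceding the corollary. There it is shown that any smooth family $X^{(t)}$ in $P(\Theta)$ with $X^{(0)} = X$ gives rise to a real-valued logarithmic derivative $q = \frac{d}{dt}(\log X^{(t)})|_{t=0}$ satisfying the linearized relations \eqref{eq:xcrproduct} and \eqref{eq:xcrsum}, and substituting \eqref{eq:cr} identifies these with \eqref{eq:hqd}. Running this identification in reverse, the $q$ supplied by Theorem \ref{thm:weierstrass} solves precisely the linearized constraints cutting out $T_X P(\Theta)$ at $X$ and is automatically real; therefore it represents an infinitesimal deformation of the circle pattern $g$ that leaves every intersection angle unchanged. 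Combining this interpretation with the Weierstrass formula yields the stated correspondence.

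The main obstacle I anticipate is the final step of promoting an algebraic tangent vector to an honest infinitesimal deformation, i.e.\ arguing that the relations \eqref{eq:crproduct} and \eqref{eq:crsum} locally define a smooth manifold near $X$ whose tangent space is exactly the kernel of their linearization. For non-degenerate circle patterns this should follow from a vertex-by-vertex rank computation parallel to the one used to linearize \eqref{eq:crproduct}--\eqref{eq:crsum}, together with Rivin's existence theorem to guarantee that $P(\Theta)$ is non-empty and locally smooth around $X$. Once this is in place, the corollary is an immediate corollary of Theorem \ref{thm:weierstrass} and the equivalence of \eqref{eq:hqd} with the real linearized cross-ratio equations.
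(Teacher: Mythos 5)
Your proposal follows essentially the same route as the paper: the corollary is obtained by combining Theorem \ref{thm:weierstrass} with the identification, via logarithmic derivatives of the cross ratios, of real solutions of \eqref{eq:hqd} with angle-preserving infinitesimal deformations of the circle pattern $g=\sigma\circ n$. The only difference is your final caveat about integrating the tangent vector into an actual family; the paper works at the level of the linearized equations \eqref{eq:xcrproduct}--\eqref{eq:xcrsum}, where ``infinitesimal deformation'' means precisely a solution of the linearization, so that extra step is not required for the statement as intended.
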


Besides discrete minimal surfaces in Euclidean space, the space of circle patterns $P(\Theta)$ serves as holomorphic data to construct discrete maximal surfaces in Minkowski space \cite{yashi2017} and discrete constant-mean-curvature-$1$ surfaces in hyperbolic space \cite{Lam2024ams}, which are also defined via integrated mean curvature as in Definition \ref{def:mean}.

Instead of the complex cross ratios, the space of circle patterns can also be parametrized by the Euclidean radii of the circumcircles. In that case, the tangent space is described by discrete harmonic functions with respect to a discrete Laplacian equipped with \emph{cotangent weights} \cite{Lam2015a}.

\subsection{Miscellaneous results}

\subsubsection{Infinitesimal deformations of circumscribed polyhedral surfaces}

The discrete Gauss map \( n : V \to \mathbb{S}^2 \) determines a polyhedral surface circumscribed by the unit sphere. The dihedral angles of this circumscribed surface coincide with the intersection angles of the associated circle pattern. Every infinitesimal deformation of the circle pattern corresponds to an infinitesimal deformation of the polyhedral surface that stays tangent to the sphere and preserves the dihedral angles. Thus, the Weierstrass data can be rephrased in terms of the deformation space of circumscribed polyhedral surfaces with fixed dihedral angles.

It is further shown in \cite{Lam2024Lap} that such infinitesimal deformations are in one-to-one correspondence with infinitesimal isometric deformations of the circumscribed polyhedral surface in \(\mathbb{R}^3\). The corresponding discrete minimal surfaces arise as reciprocal force diagrams, in the sense of the classical Maxwell–Cremona correspondence \cite{Lam2015b,Wallner2008}.

\subsubsection{Discrete spherical Laplacian}

With respect to the Gauss map \( n : V \to \mathbb{S}^2 \), one can define a discrete Laplacian induced by the spherical metric. The area of the corresponding polyhedral surface can be expressed in terms of the height function using this discrete spherical Laplacian; see \cite[Theorem~1.7]{Lam2024Lap}. This formulation is closely related to the variational identity~\eqref{eq:infarea}.

\subsubsection{Delaunay circle patterns and locally convex discrete Gauss maps}

To avoid exotic examples of discrete minimal surfaces, it is natural to impose a local convexity condition on the discrete Gauss map \( n : V \to \mathbb{S}^2 \). Specifically, we require that the Gauss map, viewed as a polyhedral surface, has dihedral angles in the interval \( [0, \pi) \). This condition is equivalent to the associated circle pattern being Delaunay, i.e., the imaginary parts of the logarithmic cross ratios satisfy \( \Im \log X \in [0, \pi) \).

Without this local convexity condition, pathological cases may arise. For instance, there exist discrete minimal surfaces whose underlying topology is that of a sphere (see Figure \ref{fig:jessen}).

\begin{figure}
\centering
\includegraphics[width=0.5\textwidth]{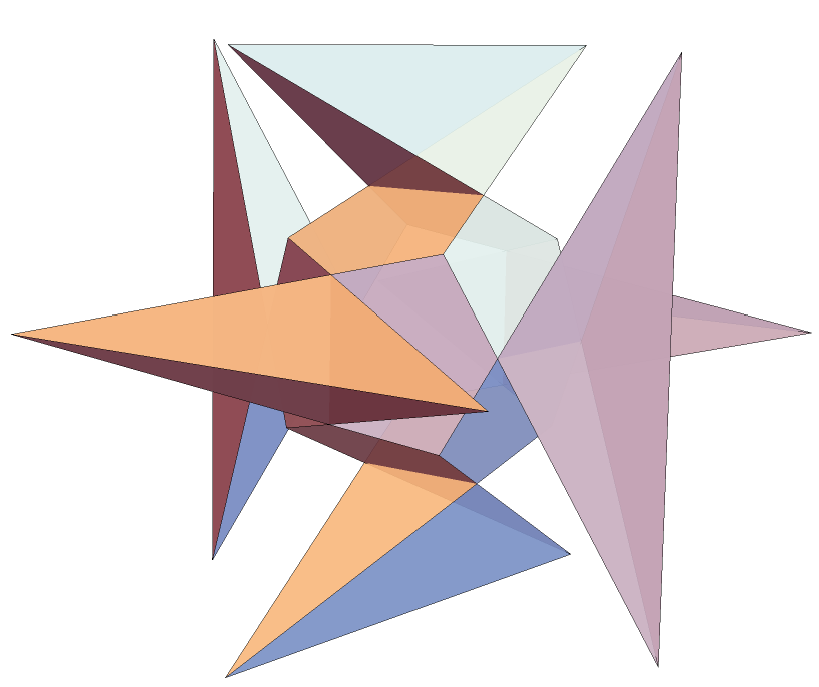}
\caption{A discrete minimal surface which is a realization of a topological sphere and each face has self-intersection. Its Gauss map fails to be locally convex. Indeed, the Gauss map is Jessen's orthogonal icosahedron, which is known to possess a non-trivial infinitesimal isometric deformation.}
\label{fig:jessen}
\end{figure}

\subsubsection{Convergence}

Circle patterns as discrete conformal maps converge to classical conformal maps. Such convergence of Weierstrass data leads to convergence of discrete minimal surfaces \cite{Bobenko2006} and other discretizations of surfaces \cite{Lam2024ams}.

\section{Variants of discretization}

Among the many possible discretizations, a central challenge in discrete differential geometry is to identify those that are mathematically fruitful. Section~\ref{sec:discrete} introduces one such discretization, distinguished by its preservation of several structural properties of classical minimal surfaces. By slightly modifying the discretization, one obtains alternative definitions of discrete minimal surfaces. Some of these variants give rise to a richer theory in certain respects, while others turn out to be more restrictive.

In the following, we explore several such variants and use this opportunity to survey alternative approaches to discrete minimal surfaces. The aim is to illustrate the broader challenge of discrete differential geometry: to discover discretizations that not only capture isolated features of the smooth theory but also support deeper mathematical structures.

\subsection{Variants of mean curvature}

Different definitions of mean curvature naturally give rise to different notions of discrete minimal surfaces.

When applying Steiner's formula in Section \ref{sec:discrete}, we considered parallel face offsets which led to our desired definition of integrated mean curvature. However, one can consider other types of parallel surfaces of a polyhedral surface, such as vertex offset and edge offset. For example, two parallel polyhedral surfaces are said to differ by an edge offset if the distance between the two surfaces is the same along all edges. This approach depends on a choice of vertex normals and has been investigated in \cite{Wallner2008}.

Instead of parallel polyhedral surfaces, one can consider the change in area of the tubular neighborhoods, which leads to another notion of the integrated mean curvature as 
\[
  H(\phi) = \frac{1}{2} \sum_{ij \subset \phi} \ell_{ij} \frac{\alpha_{ij}}{2},
  \]
It also enjoys interesting properties (See \cite[Chapter 28]{PakLectures}). However, its relevance to a broader theory of discrete minimal surfaces is yet to be understood.

\subsection{Restricted variational criterion}

Our discrete minimal surfaces are critical points of the area functional among nearby polyhedral surfaces that have parallel faces. Generally, they might not be critical points of the area functional over all polyhedral surfaces where face normals are allowed to vary.

One can restrict the definition of discrete minimal surfaces by requiring them to be critical points of the area functional over all polyhedral surfaces with fixed combinatorics. This approach is closer to the variational approach by Pinkall and Polthier \cite{Pinkall1993}. It is yet unclear to what extent it admits a Weierstrass representation. Although not all, some discrete minimal surfaces in the sense of Definition \ref{def:mean} satisfy this stronger variational criterion \cite{Lam2015b}.

\subsection{Variants of Weierstrass data/ Gauss maps}

Discrete minimal surfaces can be defined by varying the underlying objects in discrete conformal geometry, beyond circle patterns with fixed intersection angles.

\subsubsection{Discrete conformal equivalence}

Instead of considering circle patterns with fixed intersection angles, one can study circle patterns with fixed length cross ratios, which naturally leads to the notion of discrete conformal equivalence between metrics \cite{Luo2004,Bobenko2010}. This, in turn, gives rise to the concept of conjugate discrete minimal surfaces.

Two assignments of edge lengths $\ell, \tilde{\ell}: E \to \mathbb{R}_{>0}$ are said to be discretely conformally equivalent if there exists a function $u: V \to \mathbb{R}$ such that for every edge $ij \in E$,
\[
\tilde{\ell}_{ij} = e^{\frac{u_i+u_j}{2}}\ell_{ij}.
\]
For two circle patterns in the plane, their Euclidean edge lengths are discretely conformally equivalent if and only if the magnitudes of their complex cross ratios coincide; that is, $|X_{ij}| = |\tilde{X}_{ij}|$ for all $ij \in E$. Analogously to the deformation space $P(\Theta)$ with prescribed intersection angle, one can define the space of circle patterns with prescribed cross ratio magnitudes:
\[
P(\Xi)  = \left\{ X: E \to \mathbb{C} \;\middle|\; \Re(\log X) = \Xi, \text{ and } X \text{ satisfies } \eqref{eq:crproduct},\, \eqref{eq:crsum} \right\},
\]
for a fixed $\Xi: E \to \mathbb{R}$. Similar to Corollary~\ref{cor:weierstrass}, every infinitesimal deformation of a circle pattern with fixed length cross ratio gives rise to another discretization of a minimal surface -- a realization of the dual graph in space with planar vertex stars (instead of planar faces). 
Such surfaces are called A-minimal surfaces in \cite{Lam2015b}, reminiscent of classical minimal surfaces parametrized by asymptotic lines.

Suppose a circle pattern has complex cross ratio $X = e^{\Xi + \sqrt{-1}\Theta}$, so that $X \in P(\Xi) \cap P(\Theta)$. Then the tangent spaces $T_X P(\Xi)$ and $T_X P(\Theta)$ are naturally isomorphic via multiplication by $\sqrt{-1}$ on logarithmic derivatives. Under the discrete Weierstrass representation, this correspondence induces a duality between A-minimal surfaces and C-minimal surfaces as defined in Definition~\ref{def:mean}, mirroring the classical theory of conjugate minimal surfaces.

\subsubsection{Circle Packings}

Instead of intersecting circles, a \textit{circle packing} is a configuration of circles with prescribed tangency patterns \cite{Stephenson2005}. By varying the radii, one obtains many circle packings in the plane with the same tangency pattern. Circle packings were first proposed by Thurston as a discrete analogue of holomorphic functions. This approach also leads to another discretization of minimal surfaces, associated with a different notion of Gauss map.

Through stereographic projection, a planar circle packing can be mapped to a packing on the sphere. Every circle packing on the sphere arises as the intersection of the sphere with a polyhedral surface whose edges are tangent to the sphere. Such a surface is known as a \textit{Koebe polyhedral surface}. Although its vertices lie outside the sphere, it can be interpreted as a discrete Gauss map.

Circle packings thus induce a variant of the discrete Gauss map and mean curvature for polyhedral surfaces in space. Given a polyhedral surface, its discrete Gauss map is taken to be a parallel polyhedral surface of Koebe type, and mean curvature is defined via Steiner's formula with respect to this Gauss map. The vanishing of this mean curvature yields yet another discretization of minimal surfaces, termed \textit{discrete minimal surfaces of Koebe type}~\cite{Ulrike2017}.

These surfaces also admit a discrete Weierstrass representation. Similar to Corollary~\ref{cor:weierstrass}, every infinitesimal deformation of a circle packing with a fixed tangency pattern gives rise to a discrete minimal surface of Koebe type \cite{Lam2018}.

The mean curvature in the setting of circle packings is not well defined for every polyhedral surface, since the discrete Gauss map of Koebe type may not exist. On the other hand, it is shown in \cite{Lam2018} that discrete minimal surfaces of Koebe type can be regarded as special cases of C-minimal surfaces.

\subsection{Integrable systems approach}

In classical differential geometry, minimal surfaces—and more generally, isothermic surfaces—admit rich transformation theories, yielding entire families of related surfaces. The integrable systems approach plays a central role in generating and analyzing these families through algebraic and geometric transformations. Building on the work by Bobenko and Pinkall \cite{Bobenko1996}, we survey a discretization of minimal surfaces via the study of isothermic surfaces \cite{Hertrich-Jeromin2003,NORYPJ}. 

Isothermic surfaces are surfaces that can be parametrized by conformal curvature line coordinates, which are also called \textit{isothermic coordinates}. Examples of isothermic surfaces include surfaces of revolution, minimal surfaces, and surfaces with constant mean curvature. A remarkable feature of isothermic surfaces is their rich transformation theory, which relates one isothermic surface to another through the Christoffel, Goursat, Darboux, and Calapso transformations. Moreover, the class of isothermic surfaces is invariant under Möbius transformations, allowing them to be naturally studied within the framework of Möbius geometry.

One transformation particularly relevant to minimal surfaces is the Christoffel transformation. Suppose $f$ is an isothermic surface in $\mathbb{R}^3$ with curvature line coordinates $(u,v)$. Then there exists a dual surface $f^*$, known as the Christoffel transform of $f$, which can be parametrized by coordinates $(u,v)$ satisfying
\[
f^*_u=\frac{f_u}{\| f_u \|^2} , \ f^*_v=-\frac{f_v}{\| f_v \|^2}.
\]
The surface $f^*$ is isothermic and it is called the Christoffel dual of $f$. The existence of a Christoffel dual also characterizes isothermic surfaces. Furthermore, isothermic surfaces can be characterized by the notion of cross ratios in $\mathbb{R}^3$. For this, identifying the Euclidean space $\mathbb{R}^3$ with the imaginary parts of quaternions, we can define the {\it cross ratio} $Q(A,B,C,D)$ of four points $A,\, B,\, C,\, D \in  \mathbb{R}^3$ as follows:
\[
Q(A,B,C,D):=(A-B)(B-C)^{-1}(C-D)(D-A)^{-1} . 
\]
In general, the quantity $Q(A,B,C,D)$ is quaternion-valued. In particular, like the complex cross ratios of four points on the plane, the cross ratio $Q(A,B,C,D)$ is a nonzero real value if and only if four points $A,\, B,\, C,\, D$ lie on a circle or a line. 

Building upon this setting, an isothermic surface can be also characterized as follows:
\begin{proposition}\label{prop:isothermicity}
Let $f$ be a surface parametrized by curvature line coordinates $(u,v)$. Then $f$ is isothermic if and only if there exist positive real-valued functions $a(u)$ and $b(v)$ such that the following holds for every $(u,v)$:
\begin{equation}\label{eq:infincr}
\lim_{\epsilon \to 0} Q(f(u,v),f(u+\epsilon,v),f(u+\epsilon,v+\epsilon),f(u,v+\epsilon))=-\frac{a(u)}{b(v)}.
\end{equation}
\end{proposition}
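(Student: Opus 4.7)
The plan is to compute the infinitesimal cross ratio directly using quaternion arithmetic, exploiting the fact that in a curvature line parametrization the tangent vectors $f_u$ and $f_v$ are orthogonal (away from umbilics) and hence anticommute as pure imaginary quaternions. The identities I will invoke are: for $v \in \operatorname{Im}\mathbb{H}$, $v^2 = -|v|^2$ and $v^{-1} = -v/|v|^2$; and for orthogonal $v,w \in \operatorname{Im}\mathbb{H}$, $vw = -wv$.

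First I Taylor-expand each factor of $Q$ to leading order about $(u,v)$: with $A=f$, $B=f(u+\epsilon,v)$, $C=f(u+\epsilon,v+\epsilon)$, $D=f(u,v+\epsilon)$, one has $A-B = -\epsilon f_u$, $B-C = -\epsilon f_v$, $C-D = \epsilon f_u$, $D-A = \epsilon f_v$, each with $O(\epsilon^2)$ error. The powers of $\epsilon$ cancel in the cross ratio, so higher-order corrections drop out in the limit. Inverting the pure imaginary quantities and collecting scalars yields $\lim_{\epsilon \to 0} Q = f_u f_v f_u f_v / |f_v|^4$. Using $f_u f_v = -f_v f_u$ together with $f_u^2 = -|f_u|^2$ gives $f_u f_v f_u f_v = -f_u^2 f_v^2 = -|f_u|^2|f_v|^2$, so the limit equals $-|f_u|^2/|f_v|^2$.

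It then remains to decide when this quantity has the form $-a(u)/b(v)$ with $a,b$ positive. Suppose first that such a factorization holds, so $|f_u|^2/|f_v|^2 = a(u)/b(v)$. Setting $\tilde u(u) := \int \sqrt{a(u)}\,du$ and $\tilde v(v) := \int \sqrt{b(v)}\,dv$ yields new coordinates that remain curvature line coordinates (each family of lines is preserved), and a direct check gives $|f_{\tilde u}|^2 = |f_{\tilde v}|^2$, so $(\tilde u, \tilde v)$ is conformal; hence $f$ is isothermic. Conversely, if $f$ admits a conformal curvature line reparametrization $(\tilde u(u), \tilde v(v))$, then $|f_u|^2/|f_v|^2 = \tilde u'(u)^2/\tilde v'(v)^2$, giving the factorization with $a(u) = \tilde u'(u)^2$ and $b(v) = \tilde v'(v)^2$, both positive.

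The main obstacle is sign bookkeeping: the combination of quaternionic non-commutativity, the inversion rule $v^{-1} = -v/|v|^2$ for pure imaginaries, and the alternating signs among the four successive differences produces several opportunities to drop or miscount a sign. Once this algebra is executed carefully the remainder is a routine verification.
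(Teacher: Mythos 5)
The paper states Proposition \ref{prop:isothermicity} without proof, so there is no argument of the authors' to compare against; your computation is the standard one and it is correct. I checked the sign bookkeeping: the four differences are $-\epsilon f_u$, $-\epsilon f_v$, $\epsilon f_u$, $\epsilon f_v$ up to $O(\epsilon^2)$, the rule $q^{-1}=-q/|q|^2$ for pure imaginary $q$ gives $(B-C)^{-1}\to f_v/(\epsilon|f_v|^2)$ and $(D-A)^{-1}\to -f_v/(\epsilon|f_v|^2)$, the scalar prefactors multiply to $+1/|f_v|^4$, and anticommutativity of the orthogonal tangent vectors (guaranteed since curvature line coordinates have $F=0$) yields $f_uf_vf_uf_v=-|f_u|^2|f_v|^2$, so the limit is indeed $-|f_u|^2/|f_v|^2=-E/G$. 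The reparametrization step identifying the factorization $E/G=a(u)/b(v)$ with the existence of conformal curvature line coordinates via $\tilde u=\int\sqrt{a}\,du$, $\tilde v=\int\sqrt{b}\,dv$ is also correct, and is exactly the reparametrization the paper alludes to in the paragraph following the proposition (normalizing the cross ratio to $-1$). The only points worth making explicit are that the limit argument uses that $f$ is an immersion (so the leading terms are invertible quaternions) and that, for the converse, any isothermic parametrization is related to the given curvature line coordinates by separately reparametrizing the two coordinate families, which holds away from umbilics.
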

This proposition implies that, by re-parametrization, one can choose new coordinates $(\tilde{u},\tilde{v})=(\tilde{u}(u),\tilde{v}(v))$ so that $(\tilde{u},\tilde{v})$ are isothermic coordinates and the cross ratio of each infinitesimal quadrilateral bounded by curvature lines in Equation \eqref{eq:infincr} is $-1$, which is the cross ratio of a square. This is also a modern interpretation of Cayley's description: Cayley \cite{Cayley} introduced isothermic surfaces as those that can be ``divided'' into infinitesimal squares by their curvature lines.

\begin{figure}[ht]
\centering
\includegraphics[width=0.65\textwidth]{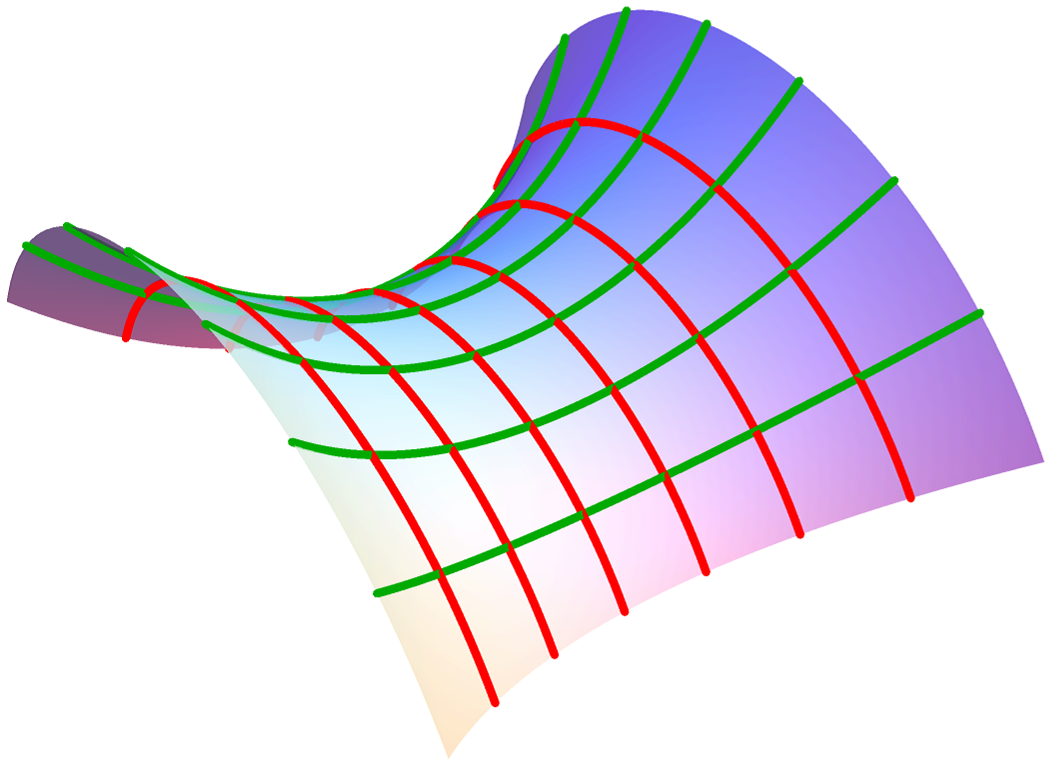}
\caption{An isothermic surface and its curvature lines. Each infinitesimal quadrilateral bounded by curvature lines becomes an infinitesimal square.}
\label{fig:divisible}
\end{figure}

With the Christoffel transformation, a minimal surface can be characterized without referring to mean curvature.

\begin{proposition}
  A surface $f$ is minimal if and only if it is isothermic and its Christoffel transform $f^*$ coincides with its Gauss map to the sphere.
\end{proposition}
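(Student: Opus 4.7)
The plan is to work in isothermic (conformal curvature-line) coordinates and compare the Christoffel formulas directly with the Weingarten equations. In curvature-line coordinates $(u,v)$, the Weingarten equations read $n_u = -\kappa_1 f_u$ and $n_v = -\kappa_2 f_v$, where $\kappa_1, \kappa_2$ are the principal curvatures.

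For the easier direction, I would suppose $f$ is isothermic with isothermic parameters $(u,v)$, so $\|f_u\|^2 = \|f_v\|^2 =: E$, and that $f^*$ agrees with $n$ up to a translation. The Christoffel formulas
$$f^*_u = \frac{f_u}{E}, \qquad f^*_v = -\frac{f_v}{E}$$
combined with the Weingarten equations immediately force $\kappa_1 = -1/E$ and $\kappa_2 = 1/E$, so $H = (\kappa_1+\kappa_2)/2 \equiv 0$ and $f$ is minimal.

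For the converse, the main step is to produce isothermic coordinates on a minimal surface. Away from umbilic points, I would take curvature-line coordinates $(u,v)$. Using $\kappa_1 = -\kappa_2 =: \kappa$, the Codazzi–Mainardi equations reduce to $(\log(E\kappa))_v = 0$ and $(\log(G\kappa))_u = 0$, so that $E\kappa = \phi(u)$ and $G\kappa = \psi(v)$ are each functions of a single variable. The reparametrization $d\tilde{u} = \sqrt{\phi(u)}\,du$ and $d\tilde{v} = \sqrt{\psi(v)}\,dv$ then yields coordinates with $\tilde{E} = \tilde{G} = 1/\kappa$, exhibiting $f$ as isothermic. Running the easy direction in reverse then gives $f^*_{\tilde{u}} = \kappa f_{\tilde{u}} = -n_{\tilde{u}}$ and $f^*_{\tilde{v}} = -\kappa f_{\tilde{v}} = -n_{\tilde{v}}$, so $f^*$ and $n$ agree up to a translation (the sign being fixed by the chosen orientation of $n$, and the labeling of the two principal directions).

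The main technical step is really just the construction of isothermic coordinates; once these are in hand, the rest is a direct matching of first-order formulas. Umbilic points of a minimal surface coincide with zeros of the holomorphic Hopf differential and therefore form a discrete set, so the local argument extends by continuity across them. An alternative and perhaps slicker route uses conformal (rather than curvature-line) coordinates from the start: writing the Hopf differential as $\Phi = \phi(z)\,dz^2$, one locally sets $w = \int\sqrt{\phi}\,dz$ to get $\Phi = dw^2$, and then $(\Re w, \Im w)$ are simultaneously conformal and curvature lines, exhibiting $f$ as isothermic without explicit recourse to Codazzi.
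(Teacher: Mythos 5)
The paper states this classical proposition without giving a proof, so there is no in-paper argument to compare against; judged on its own, your proof is correct and is the standard one. Both directions check out: the Christoffel formulas combined with the Weingarten equations $n_u=-\kappa_1 f_u$, $n_v=-\kappa_2 f_v$ give the equivalence once isothermic coordinates exist, and your construction of those coordinates on a minimal surface (via the Codazzi equations $(\log(E\kappa))_v=(\log(G\kappa))_u=0$, or equivalently by normalizing the holomorphic Hopf differential to $dw^2$) is the standard route. You correctly flag the only delicate points: the overall sign of $n$ versus $f^*$ is a matter of orientation and of which principal direction is labelled $u$, and the argument is local away from umbilics, which for a nonplanar minimal surface are the isolated zeros of the Hopf differential; the identity $f^*=n$ then extends across them by continuity even though curvature-line coordinates themselves need not.
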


Following this approach to minimal surfaces, we recall discrete isothermic surfaces as introduced by Bobenko and Pinkall \cite{Bobenko1996}. Consider a map $f$ from a sublattice $D$ of $\mathbb{Z}^2$ to $\mathbb{R}^3$. For $(m,n) \in D \subset \mathbb{Z}^2$, write
\begin{eqnarray*}
f_i=f(m,n), \ f_j=f(m+1,n), \ f_k=f(m+1,n+1), \, f_{\ell}=f(m,n+1) \, .
\end{eqnarray*}
Mimicking the smooth case, discrete isothermic surfaces in $\mathbb{R}^3$ are defined as follows: A discrete surface $f$ is called {\it discrete isothermic} if it satisfies the following cross ratio condition
\begin{equation}
Q(f_i,f_j,f_k,f_{\ell})=\frac{\alpha_{ij} }{\alpha_{i\ell}}<0 \ \quad \forall (m,n)\in D \, ,
\end{equation}
where $\alpha_{ij}$ (resp. $\alpha_{i\ell}$) is a real-valued function depending only on horizontal direction (resp. vertical direction). Immediate from the definition, the class of discrete isothermic surfaces are preserved under M\"{o}bius transformations as in the smooth theory.

For a given discrete isothermic surface $f$, one can define a discrete version of the Christoffel transform as follows. Let $f$ be a discrete isothermic surface with its cross ratio being $-\alpha_{ij}/\alpha_{i\ell}$. Then there is a discrete surface $f^*$ satisfying
\[
f^*_j-f^*_i=\alpha_{ij}\frac{f_j-f_i}{ \| f_j-f_i \|^2 } \, , \ f^*_{\ell}-f^*_i=\alpha_{i\ell}\frac{f_{\ell}-f_i}{ \| f_{\ell}-f_i \|^2 }
\]
and is called the {\it Christoffel dual} of $f$. Like in the smooth case, one can show that the Christoffel dual is also discrete isothermic.

Following the smooth theory, one defines discrete minimal surfaces using the Christoffel transform, without referring to mean curvature.

\begin{theorem}\cite{Bobenko1996}
 A discrete isothermic surface $f$ is minimal if and only if its Christoffel dual is inscribed in a sphere, i.e. $f^*:D \to \mathbb{S}^2$. Furthermore, a discrete isothermic minimal surface admits a Weierstrass representation in the following form: there exists a discrete isothermic surface $g : D \rightarrow \mathbb{R}^2 \cong \mathbb{C}$ such that

 \begin{equation}
\begin{split}
f_j-f_i=\frac{\alpha_{ij} }{2} \mathrm{Re} \left( \frac{1}{g_j-g_i} \begin{pmatrix} 1-g_ig_j \\ \sqrt{-1}(1+g_ig_j) \\ g_i+g_j \end{pmatrix} \right)  \, , \\
f_{\ell}-f_i=\frac{\alpha_{i\ell} }{2} \mathrm{Re} \left( \frac{1}{g_{\ell}-g_i} \begin{pmatrix} 1-g_ig_{\ell} \\ \sqrt{-1}(1+g_ig_{\ell}) \\ g_i+g_{\ell} \end{pmatrix} \right) \, .
\end{split}
\end{equation}
and the Christoffel dual is given by $f^*=\sigma^{-1} \circ g$, where $\sigma$ is the stereographic projection.

\end{theorem}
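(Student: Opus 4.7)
The plan is to leverage the Möbius invariance of discrete isothermicity and the involutive character of the Christoffel transform. The first equivalence is the discrete shadow of the classical characterization that an isothermic surface $f$ is minimal iff its Christoffel dual equals its Gauss map (which automatically lies in $\mathbb{S}^2$); in the Bobenko--Pinkall framework this is either taken as the working definition of minimality within the discrete isothermic class, or verified by showing that $f^*\subset\mathbb{S}^2$ implies the discrete Steiner-type vanishing of mean curvature on each face. Either way, the substantive content is the Weierstrass representation, and the rest of the proof focuses there.

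Given a discrete isothermic minimal surface $f$, define $g := \sigma \circ f^* : D \to \mathbb{C}$. Since $\sigma$ is a Möbius transformation and discrete isothermic surfaces form a Möbius-invariant class with the same real cross-ratio factorization $-\alpha_{ij}/\alpha_{i\ell}$, the map $g$ is itself discrete isothermic with parameters $\alpha_{ij}, \alpha_{i\ell}$; it thus qualifies as the Weierstrass data. Next, invoke the involutive character of the Christoffel transform: inverting the defining relation $f^*_j - f^*_i = \alpha_{ij}(f_j - f_i)/\|f_j - f_i\|^2$ yields
\[
f_j - f_i \;=\; \alpha_{ij}\,\frac{f^*_j - f^*_i}{\|f^*_j - f^*_i\|^2}.
\]
Substituting $f^* = \sigma^{-1}(g)$ reduces the claim to a purely algebraic identity in $g_i, g_j$.

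This identity is the technical heart. Using the explicit formula $\sigma^{-1}(g) = (g + \bar{g},\, -\sqrt{-1}(g-\bar{g}),\, |g|^2 - 1)/(1 + |g|^2)$ together with the two key ingredients
\[
\|\sigma^{-1}(g_j) - \sigma^{-1}(g_i)\|^2 \;=\; \frac{4|g_j - g_i|^2}{(1+|g_i|^2)(1+|g_j|^2)}, \qquad |g_i|^2 g_j - |g_j|^2 g_i \;=\; -g_ig_j\,\overline{g_j - g_i},
\]
the denominator factors $(1+|g_i|^2)(1+|g_j|^2)$ cancel and each component of the Weierstrass vector emerges as $\tfrac{1}{2}\mathrm{Re}$ of the corresponding entry of $(1 - g_ig_j,\, \sqrt{-1}(1 + g_ig_j),\, g_i+g_j)/(g_j - g_i)$. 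The main obstacle is the bookkeeping in the third (vertical) component, whose numerator $|g_j|^2 - |g_i|^2$ is antisymmetric in $g_i$ and $g_j$ and requires the refactoring $|g_j|^2 - |g_i|^2 = \overline{g_j - g_i}\cdot g_i + \bar{g}_j(g_j - g_i)$ to expose the real-part structure matching $(g_i+g_j)/(g_j-g_i)$. For the converse, one starts from a discrete isothermic $g : D \to \mathbb{C}$, checks that the right-hand side of the Weierstrass formula is a closed discrete $\mathbb{R}^3$-valued $1$-form on each elementary quadrilateral (a direct consequence of the cross-ratio condition on $g$), integrates to produce $f$, and verifies that $\sigma^{-1}(g)$ is its Christoffel dual by running the same calculation in reverse.
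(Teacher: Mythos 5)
The paper does not prove this theorem; it is stated as a cited result from Bobenko--Pinkall, so there is no in-paper argument to compare against. Your proposal is the standard derivation from that source and is correct: the first equivalence is essentially definitional in the Bobenko--Pinkall framework (the survey itself frames it as defining minimality ``without referring to mean curvature''), the involutivity of the Christoffel transform reduces the representation to the algebraic identity $\alpha_{ij}\,(n_j-n_i)/\|n_j-n_i\|^2=\tfrac{\alpha_{ij}}{2}\Re\bigl((1-g_ig_j,\ \sqrt{-1}(1+g_ig_j),\ g_i+g_j)/(g_j-g_i)\bigr)$ for $n=\sigma^{-1}\circ g$, and your two key ingredients (the chordal-distance formula and $|g_i|^2g_j-|g_j|^2g_i=-g_ig_j\overline{g_j-g_i}$) do make each component come out correctly, including the antisymmetric third component via $\bigl(|g_j|^2-|g_i|^2\bigr)/|g_j-g_i|^2=\Re\bigl((g_i+g_j)/(g_j-g_i)\bigr)$.
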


\begin{figure}
\centering
\includegraphics[width=0.65\textwidth]{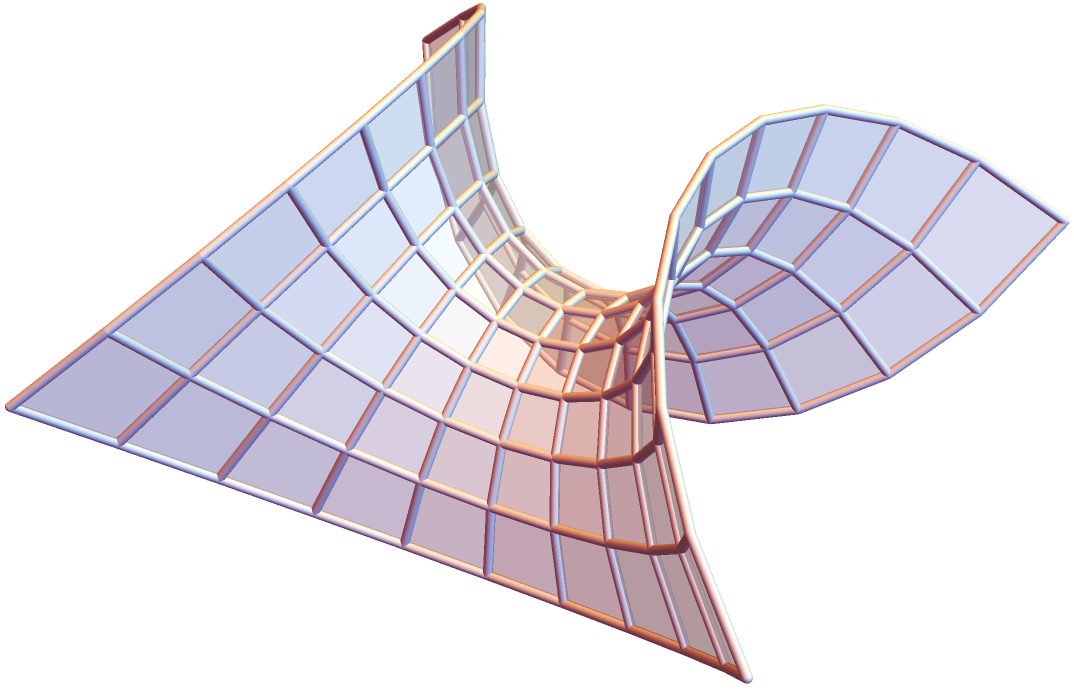}
\caption{A discrete isothermic surface that is minimal. The picture differs from the left picture in Figure \ref{fig:enneper}, although they look similar.}
\label{fig:enneper2}
\end{figure}

The mapping $g$ is a circle pattern since the complex cross ratio of each quadrilateral face is real-valued and hence every face admits a circumcircle. It plays the role of the Weierstrass data for the discrete isothermic minimal surface. Later on, Bobenko, Pottmann, and Wallner \cite{Bobenko2008} introduced curvature definitions for polyhedral surfaces via Steiner's formula and showed their compatibility with discrete isothermic minimal surfaces. In \cite{Lam2015,Lam2015b}, it is further shown that discrete isothermic minimal surfaces are special cases of C-minimal surfaces in Definition \ref{def:mean}. 

Before concluding this section, we emphasize that discrete isothermic surfaces arise naturally within the realm of integrable systems and can be elegantly understood via surface transformations. Applying successive Darboux transformations to an initial smooth isothermic surface generates a sequence of new ones. Because these transformations commute by the Bianchi permutability theorem, the orbit of a fixed point on the initial surface forms a multidimensional lattice in space. A discrete isothermic net then arises precisely as a two-dimensional sub-lattice extracted from these commutative Darboux transformations \cite[Theorem 1.31]{Bobenko1996}.

\subsection{Further comments on recent progress on discrete surface theory}

Recently, in the realm of integrable geometry, discrete surface theories in $\mathbb{R}^3$ have been extended to other space forms. Burstall,  Hertrich-Jeromin, Rossman \cite{Burstall2018} gave a Lie-geometric formulation of discrete linear Weingarten surfaces in space forms. This framework encompasses discrete isothermic surfaces, as well as other discrete surfaces that are not necessarily discrete isothermic; see \cite{NORYPJ} for a detailed exposition. Building on this, Pember, Polly, and the second author \cite{Pember2023} derived Weierstrass-type representations for a variety of discrete surfaces, including several previously known ones obtained in \cite{Hoffmann2012,Rossman2018,Yashi2015}. 

Although this approach enables us to handle a much broader class of discrete surfaces than before, there still remain strong restrictions on the choice of discrete coordinates, and developing a more general discrete surface theory remains an open challenge. To address this issue, following the theory of discrete minimal surfaces presented here, we \cite{yashi2017} described trivalent maximal surfaces in Minkowski $3$-space. While some of the results parallel those for discrete minimal surfaces, the behaviors of the resulting trivalent maximal surfaces are quite different, and this leads to a new research subject to analyze singular behaviors of discrete surfaces. Also, the first author \cite{Lam2024ams} described discrete constant-mean-curvature-$1$ surfaces in hyperbolic $3$-space. Such surfaces can be represented by pairs of circle patterns, and their convergence to the smooth counterparts has been established. It is natural to expect that, analogously to the smooth geometry \cite{Aiyama1999}, one can also develop discrete spacelike constant mean curvature $1$ surfaces in de Sitter $3$-space.

\section{Open Questions}

The study of discrete minimal surfaces remains rich with open questions, both geometric and analytic in nature. We conclude by highlighting several directions that invite further investigation.

\subsection*{Local embeddedness}

The Weierstrass representation via circle patterns (Theorem~\ref{thm:weierstrass}) provides a powerful method for constructing all simply connected discrete minimal surfaces. Nevertheless, even when the underlying circle pattern is Delaunay, it remains unclear whether the resulting surface is locally embedded or may exhibit self-intersections.

\smallskip
\noindent\textit{Question:} Characterize when the discrete minimal surfaces obtained from Delaunay circle patterns have locally embedded faces.

\subsection*{Topology and periodicity}

While the theory of simply connected discrete minimal surfaces is well developed, the construction of examples with non-trivial topology is still in its infancy. In the smooth setting, triply periodic minimal surfaces provide examples of high-genus minimal surfaces and are deeply connected to material science. Developing an analogous discrete theory would be a major step forward.

\smallskip
\noindent\textit{Question:} Develop a systematic framework for constructing discrete minimal surfaces with non-trivial topology, such as triply periodic discrete minimal surfaces.

\subsection*{Integrable structure and general cell decompositions}

Current integrable systems approaches to discrete surface theory rely heavily on quad-mesh structures. Extending these constructions to arbitrary triangulations or general polyhedral surfaces remains an open challenge.

\smallskip
\noindent\textit{Question:} Extend the integrable systems framework for discrete surface theory beyond quadrilateral meshes to general polyhedral cell decompositions.

\subsection*{Global behavior and Bernstein-type results}

In the smooth theory, Bernstein’s theorem asserts that any entire minimal graph in \(\mathbb{R}^3\) must be planar. The discrete analogue of this result is still unknown: can a discrete minimal surface that is a graph over the entire plane be non-planar?

\smallskip
\noindent\textit{Question:} Formulate and prove (or disprove) a Bernstein-type theorem for discrete minimal surfaces.

\section*{Acknowledgements}

This work was partially supported by MEXT Promotion of Distinctive Joint Research Center Program JPMXP0723833165 and Osaka Metropolitan University Strategic Research Promotion Project (Development of International Research Hubs). The second author was also partially supported by JSPS KAKENHI Grant Numbers 25K06978, 23K03091.

\bibliography{survey_dismin}
\bibliographystyle{plain}

\end{document}